%%%%%%%%%%%%%%%%%%%%%%%%%%%%%  PREAMBLE  %%%%%%%%%%%%%%%%%%%%%%%%%%%%%%%
\documentclass{amsart}

%%  PACKAGES
\usepackage{amsthm}
\usepackage[leqno]{amsmath}
\usepackage{latexsym,amsfonts,amssymb}
%\usepackage[T1]{fontenc}
%\usepackage[light]{anttor}
%\normalfont
%\usepackage[notref,notcite]{showkeys}
\usepackage[all]{xy} \SelectTips{eu}{} \SilentMatrices
\usepackage{hyperref}
\usepackage{verbatim}

\setlength{\textwidth}{5.8 in}

%%  DEFINITION FILES
%%%%%%%%%%%%%%%%%%%%%%%%%%%%  THEOREMSTYLES  %%%%%%%%%%%%%%%%%%%%%%%%%%%%

\newcommand{\numberseries}{\mdseries}   %Fontseries used for numbering

\newlength{\thmtopspace}                %Space above theorem
\newlength{\thmbotspace}                %Space below theorem
\newlength{\thmheadspace}               %Space after theorem label
\newlength{\thmindent}                  %For indenting

\setlength{\thmtopspace}%
{0.7\baselineskip plus 0.35\baselineskip minus 0.2\baselineskip}
\setlength{\thmbotspace}%
{0.45\baselineskip plus 0.15\baselineskip minus 0.1\baselineskip} 
\setlength{\thmheadspace}{0.5em}
\setlength{\thmindent}{0pt}     

\renewcommand{\subparagraph}{\vspace*{\thmbotspace}}

%% 1. STYLES FOR THEOREMS WITH CHANGEABLE LABELS

\newtheoremstyle{bfupright head,slanted body}
                {\thmtopspace}{\thmbotspace}
                {\slshape}{\thmindent}{\bfseries}{.}{\thmheadspace}
                {{\numberseries \thmnumber{(#2) }}\thmnote{#3}}

\newtheoremstyle{bfupright head,upright body}
                {\thmtopspace}{\thmbotspace}
                {\upshape}{\thmindent}{\bfseries}{.}{\thmheadspace}
                {{\numberseries \thmnumber{(#2) }}\thmnote{#3}}

\newtheoremstyle{bfit head,upright body}
                {\thmtopspace}{\thmbotspace}
                {\upshape}{\thmindent}{\upshape}{.}{\thmheadspace}
                {{\numberseries\thmnumber{(#2) }}
                {\bfseries\itshape\thmnote{\negthickspace#3}}}

\newtheoremstyle{it head,upright body}
                {\thmtopspace}{\thmbotspace}
                {\upshape}{\thmindent}{\upshape}{.}{\thmheadspace}
                {{\numberseries\thmnumber{(#2) }}
                {\itshape\thmnote{\negthickspace#3}}}

%% 2. STYLES FOR THEOREMS WITH FIXED LABELS

\newtheoremstyle{fixed bf head,slanted body}
                {\thmtopspace}{\thmbotspace}{\slshape}
                {\thmindent}{\bfseries}{.}{\thmheadspace}
                {{\numberseries \thmnumber{(#2) }}\thmname{#1}\thmnote{ (#3)}}

\newtheoremstyle{fixed bf head,upright body}
                {\thmtopspace}{\thmbotspace}{\upshape}
                {\thmindent}{\bfseries}{.}{\thmheadspace}
                {{\numberseries \thmnumber{(#2) }}\thmname{#1}\thmnote{ (#3)}}

\newtheoremstyle{fixed bfit head,upright body}
                {\thmtopspace}{\thmbotspace}{\upshape}
                {\thmindent}{\bfseries\itshape}{.}{\thmheadspace}
                {{\numberseries \thmnumber{(#2) }}\thmname{#1}\thmnote{ (#3)}}

\newtheoremstyle{sc head,small body}
                {\thmtopspace}{\thmbotspace}
                {\small\upshape}{\thmindent}{\scshape}{.}{\thmheadspace}
                {\thmname{#1}}

%% 3. STYLESFOR ENVIRONMENTS WITHOUT LABELS

\newtheoremstyle{numbered paragraph}
                {\thmtopspace}{\thmbotspace}{\upshape}
                {\thmindent}{\upshape}{}{0pt}
                {{\numberseries \thmnumber{(#2) }}}

\newtheoremstyle{unnumbered paragraph}
                {\thmtopspace}{\thmbotspace}{\upshape}
                {\parindent}{\upshape}{}{0pt}

%%%%%%%%%%%%%%%%%%%%%%  THEOREM-LIKE ENVIROMENTS  %%%%%%%%%%%%%%%%%%%%%%%

%% 1. THEOREMS WITH CHANGEABLE LABELS

\theoremstyle{bfupright head,slanted body}
\newtheorem{res}{}[section]             \newtheorem*{res*}{}

\theoremstyle{bfit head,upright body}
                 \newtheorem*{com*}{}

\theoremstyle{bfupright head,upright body}
\newtheorem{bfhpg}[res]{}               \newtheorem*{bfhpg*}{}

\theoremstyle{it head,upright body}
               \newtheorem*{ithpg*}{}

%% 2. THEOREMS WITH FIXED LABELS

\theoremstyle{sc head,small body}

\theoremstyle{fixed bf head,slanted body}
\newtheorem{thm}[res]{Theorem}          \newtheorem*{thm*}{Theorem}
\newtheorem{prp}[res]{Proposition}      \newtheorem*{prp*}{Proposition}
\newtheorem{cor}[res]{Corollary}        \newtheorem*{cor*}{Corollary}
\newtheorem{lem}[res]{Lemma}            \newtheorem*{lem*}{Lemma}

\theoremstyle{fixed bf head,upright body}
\newtheorem{dfn}[res]{Definition}       \newtheorem*{dfn*}{Definition}
     \newtheorem*{con*}{Construction}
      \newtheorem*{obs*}{Observation}
\newtheorem{rmk}[res]{Remark}           \newtheorem*{rmk*}{Remark}
          \newtheorem*{exa*}{Example}
         \newtheorem*{exe*}{Exercise}
            \newtheorem{stp*}{Setup}

%% 3. ENVIRONMENTS WITHOUT LABELS

\theoremstyle{numbered paragraph}
\newtheorem{ipg}[res]{}

\theoremstyle{unnumbered paragraph}
\newtheorem{ipg*}{}

%%%%%%%%%%%%%%%%%%%%%%%%%%%%%  LISTS ETC.  %%%%%%%%%%%%%%%%%%%%%%%%%%%%%%

% Length                         Controls

\newlength{\thmlistleft}        %leftmargin
\newlength{\thmlistright}       %rightmargin
\newlength{\thmlistpartopsep}   %partopsep
\newlength{\thmlisttopsep}      %topsep
\newlength{\thmlistparsep}      %parsep
\newlength{\thmlistitemsep}     %itemsep

\setlength{\thmlistleft}{2.5em}
\setlength{\thmlistright}{0pt}
\setlength{\thmlistitemsep}{0.5ex}
\setlength{\thmlistparsep}{0pt}
\setlength{\thmlisttopsep}{1.5\thmlistitemsep}
\setlength{\thmlistpartopsep}{0pt}

%% 1. ENVIRONMENT FOR LISTING EQUIVALENT CONDITIONS

\newcounter{eqc} 
  {\end{list}}%

% Label

% Previous label primed by #1

% Label with correct space for use in manually generated list

%% 2. ENVIRONMENT FOR LISTING PROPERTIES

\newcounter{prt}
  {\end{list}}%

% Label

% Previous label primed by #1

%% 3. ENVIRONMENT FOR LISTING REQUIREMENTS

\newcounter{rqm}
  {\end{list}}%

% Label

% Previous label primed by #1

%% 4. ENVIRONMENT FOR LISTING ITEMS NUMBERED AS EQUATIONS

%
  {\end{list}}%

%% 5. ENVIRONMENT FOR LISTING ITEMS

%
  {\end{list}}%

%% 6. ENVIRONMENT FOR LISTING EXERCISES

\newcounter{exercise}
  {\end{list}}%

%% 6. ENVIRONMENT FOR QUOTING A RESULT

%

%%%%%%%%%%%%%%%%%%%%%%%%%%%%%%%%  PROOFS  %%%%%%%%%%%%%%%%%%%%%%%%%%%%%%%

%% 1. ENVIRONMENT FOR PROOF NUMBERED AS ANY OTHER PARAGRAPH

%% 2. ENVIRONMENT FOR UNNUMBERED PROOF

\newenvironment{prf*}[1][Proof]{%
  \begin{proof}[\bf #1]
    \setcounter{equation}{0}
    \renewcommand{\theequation}{\arabic{equation}}}
  {\end{proof}
}

%% 3. FOR USE IN PROOFS

% Step -

% Proof of "-"

% Proof of tag (-)

% Proof of (-) => (-)

% Proof of (-) <=> (-)

%%%%%%%%%%%%%%%%%%%%%%% REFERENCES AND CITATIONS %%%%%%%%%%%%%%%%%%%%%%%%

%% 1. REFERENCES TO PARAGRAPHS WITH LABEL INCLUDED

\newcommand{\pgref}[1]{(\ref{#1})}

%% 2. REFERENCES TO EXERCISES AND EQUATIONS

\renewcommand{\eqref}[1]{\pgref{eq:#1}}

%% 3. CITATIONS

%%%%%%%%%%%%%%%%%%%%%%%%%%%%%%  INDEXING  %%%%%%%%%%%%%%%%%%%%%%%%%%%%%%%

%%%%%%%%%%%%%%%%%%%%%%%%%%%  AUTHOR COMMENTS  %%%%%%%%%%%%%%%%%%%%%%%%%%

\newcommand{\nt}[2][$^\diamondsuit$]{%
        \hspace{0pt}#1\marginpar{\tt\raggedleft #1 #2}}

%%%%%%%%%%%%%%%%%%%%%%%%%%%%%%%%  SETS  %%%%%%%%%%%%%%%%%%%%%%%%%%%%%%%%%

%%%%%%%%%%%%%%%%%%%%%%%  DISPLAY TEXT SHORTHANDS  %%%%%%%%%%%%%%%%%%%%%%%

%%%%%%%%%%%%%%%%%%%%%%  SINGLE LETTER ABBREVIATIONS  %%%%%%%%%%%%%%%%%%%%

            % generic homological degree

\renewcommand{\aa}{\mathfrak{a}}
\newcommand{\bb}{\mathfrak{b}}
\newcommand{\m}{\mathfrak{m}}
\newcommand{\n}{\mathfrak{n}}
\newcommand{\p}{\mathfrak{p}}

%%%%%%%%%%%%%%%%%%%%%%%%%%%  SYMBOLS AND ARROWS  %%%%%%%%%%%%%%%%%%%%%%%%

\renewcommand{\le}{\leqslant}
\renewcommand{\ge}{\geqslant}

%%%%%%%%%%%%%%%%%%%%%%%%%%%  RINGS AND FIELDS  %%%%%%%%%%%%%%%%%%%%%%%%%%

\newcommand{\Rm}{(R,\m)}
\newcommand{\Rmk}{(R,\m,k)}

\newcommand{\Rhat}{\widehat{R}}

%%%%%%%%%%%%%%%%%%%%%%%%%%%%%%%%%  MAPS  %%%%%%%%%%%%%%%%%%%%%%%%%%%%%%%%

\renewcommand{\Im}[1]{\nobreak{\operatorname{Im}#1}}
\newcommand{\Ker}[1]{\nobreak{\operatorname{Ker}#1}}
\newcommand{\Coker}[1]{\nobreak{\operatorname{Coker}#1}}

%%%%%%%%%%%%%%%%%%%%%%%%%%%%%%%%  COMPLEXES  %%%%%%%%%%%%%%%%%%%%%%%%%%%%

%%%%%%%%%%%  BASS AND BETTI NUMBERS, BASS AND POINCARE SERIES  %%%%%%%%%%

%%%%%%%%%%%%%%%%%  INVARIANTS FOR MODULES AND COMPLEXES  %%%%%%%%%%%%%%%%

\newcommand{\AssR}{\operatorname{Ass}R}
\newcommand{\AsshR}{\operatorname{Assh}R}

\newcommand{\dimR}{\operatorname{dim}R}

\newcommand{\Spec}[1]{\operatorname{Spec}#1}

\newcommand{\ann}[2][R]{\operatorname{ann}_{#1}#2}
\newcommand{\Ann}[2][R]{\operatorname{Ann}_{#1}#2}

\newcommand{\Assh}[2][R]{\operatorname{Assh}_{#1}#2}

\renewcommand{\dim}[2][R]{\operatorname{dim}_{#1}#2}

\newcommand{\dpt}[2][R]{\operatorname{depth}_{#1}#2}

\newcommand{\hgt}[2][R]{\operatorname{ht}_{#1}#2}
\newcommand{\rnk}[2][k]{\operatorname{rank}_{#1}#2}

\newcommand{\id}[2][R]{\operatorname{id}_{#1}#2}

%%%%%%%%%%%%%%%%%%%%%%%%%%  (DERIVED) FUNCTORS  %%%%%%%%%%%%%%%%%%%%%%%%%

\newcommand{\Hom}[3][R]{\operatorname{Hom}_{#1}(#2,#3)}

\newcommand{\Ext}[4][R]{\operatorname{Ext}_{#1}^{#2}(#3,#4)}

\newcommand{\Tor}[4][R]{\operatorname{Tor}^{#1}_{#2}(#3,#4)}

%%%%%%%%%%%%%%%%%%%%%%%%%%%%%%  CATEGORIES  %%%%%%%%%%%%%%%%%%%%%%%%%%%%%

%%%%%%%%%%%%%%%%%%%%%%%%%%%%%  HYPHENATION  %%%%%%%%%%%%%%%%%%%%%%%%%%%%%

\hyphenation{mo-dule com-plex com-plex-es mor-phism ho-mo-mor-phism
iso-mor-phism pro-jec-tive in-jec-tive re-so-lu-tion ho-mo-lo-gy
ho-mo-lo-gi-cal ho-mo-lo-gi-cally du-a-liz-ing re-si-due}

%%%%%%%%%%%%%%%%%%%%%%%%%%%%%%%%%  VARIA  %%%%%%%%%%%%%%%%%%%%%%%%%%%%%%%

\makeatletter
\def\@nobreak@#1{\mathchoice%
  {\nobreakdef@\displaystyle\f@size{#1}}%
  {\nobreakdef@\nobreakstyle\tf@size{\firstchoice@false #1}}%
  {\nobreakdef@\nobreakstyle\sf@size{\firstchoice@false #1}}%
  {\nobreakdef@\nobreakstyle\ssf@size{\firstchoice@false #1}}%
  \check@mathfonts}%
\def\nobreakdef@#1#2#3{\hbox{{%
                    \everymath{#1}%
                    \let\f@size#2\selectfont%
                    #3}}}%
\makeatother

%%  OUTPUT CONTROL
%\usepackage{syntonly} \syntaxonly
\renewcommand{\nt}[2][]{} %Hides comments in tt

%%  NUMBERING
\numberwithin{equation}{res}
\setcounter{footnote}{2}

%% COMMANDS

\newcommand{\xx}{\pmb{x}}
\newcommand{\cc}{\pmb{c}}

\renewcommand{\k}{\mathsf{k}}
\renewcommand{\id}{Id}

\newcommand{\kas}{\mathrm{KAS}}
\newcommand{\ho}{\mathrm{H}}

\renewcommand{\Rmk}{(R,\m,\k)}
\renewcommand{\rnk}[2][\k]{\operatorname{rank}_{#1}#2}

\newcommand{\im}{\operatorname{Im}}

%%%%%%%%%%%%%%%%%%%%%%%%%%%%%%%  DOCUMENT  %%%%%%%%%%%%%%%%%%%%%%%%%%%%%%

%%%%%%%%%%%%%%%%%%%%%%%%%%%%%%%%%%%%%%%%%%%%%%%%%%%%%%%%%%%%%%%%%%%%%%%%%%%%%%%
%                               DOCUMENT
%%%%%%%%%%%%%%%%%%%%%%%%%%%%%%%%%%%%%%%%%%%%%%%%%%%%%%%%%%%%%%%%%%%%%%%%%%%%%%%

\begin{document}

\sloppy \allowdisplaybreaks[4]

\title{Uniform Artin-Rees Bounds for syzygies}

\author[I. Aberbach]{Ian M.  Aberbach} 
\email{aberbachi@missouri.edu}
\address{Department of Mathematics, University of Missouri, Columbia, MO 65211, USA.}

\author [A. Hosry]{Aline Hosry}
\email{ahosry@ndu.edu.lb}
\address{Department of Mathematics and Statistics, Notre Dame University-Louaize,  Zouk Mosbeh, Lebanon}

\author[J. Striuli]{Janet Striuli}
\email{jstriuli@fairfield.edu}
\address{Department of Mathematics,  Fairfield University, Fairfield, CT~06824, USA.}

\thanks{The third author was supported by an AWM grant from May12th  to June 12th, 2009. The grant was used to visit the first author}

\date{\today}

%%\keywords{}

%%\subjclass[2000]{}

\begin{abstract}
  Let $\Rm$ be a local Noetherian ring, let $M$ be  
  a finitely generated $R$-module and let $(F_{\bullet},\partial_{\bullet})$
   be a free resolution of $M$. We find a uniform bound $h$ such that the Artin-Rees containment 
   $I^nF_i\cap \Im \partial_{i+1} \subseteq I^{n-h}\Im \partial_{i+1}$ holds 
   for all integers $i\ge d$, for all integers  $n\ge h$, and for all ideals $I$ of $R$.  In fact, we show that a considerably stronger statement holds.  The uniform bound $h$ holds for all ideals and all resolutions of $d$th syzygy modules. In order to prove our statements, we introduce the concept of Koszul annihilating sequences.
\end{abstract}

\maketitle

%%% INTRODUCTION
\section{Introduction}

Let $R$ be a Noetherian commutative ring.  To paraphrase Huneke in \cite{Hune-92}, the unsubtle finiteness condition that each ideal of $R$ is finitely generated often, in fact, implies subtle forms of finiteness in the ring, many of which are ``uniform.''  By ``uniform'' we mean, for instance, that one element may annihilate the homology of an entire class of complexes, or that an integer known to exist for a certain finite set of data (e.g., a finitely generated module and submodule, along with an ideal), may actually apply {\it uniformly}  to an infinite data set.  The Main Theorem, Theorem \ref{MainTheorem} below, is of the latter form, but in order to prove it, we have been led to the notion of what we call a Koszul annihilating sequence, which shows a form of finiteness of the former type.  

Let $R$ be a  Noetherian ring and let $M \subseteq N$ be  finitely
generated $R$-modules.  For an ideal
$I\subseteq R$, the classical Artin-Rees Lemma states that there
exists an integer $h$, depending on the ideal $I$ and the inclusion $M
\subseteq N$, such that for all integers $n \ge h$, the  equality 
\begin{equation}\label{ARequal}
I^nN \cap M =  I^{n-h}(I^hN \cap M)
\end{equation}
holds. A weaker form of this statement, which is often used in applications, is that for all integers $n \ge h$, the following inclusion holds:
\begin{equation}\label{ARcontain}
I^nN \cap M \subseteq  I^{n-h}M.
\end{equation}
The proof of the equality in equation \ref{ARequal} uses the fact that  the associated graded ring of $R$ with respect to $I$ and the associated graded modules of $M$ and $N$ with respect to $I$  are  Noetherian. With this approach,  the integer $h$ above, which is referred to as   the Artin-Rees number,  clearly depends on all of the data, that is, on $I$ and on $M \subseteq N$.

Eisenbud and Hochster, in \cite{EisHoc79}, first raised the question of whether or not there might be an integer $h$ as in equality \ref{ARequal} which, given $M \subseteq N$, works for all the maximal ideals in the ring $R$.  In this paper, the term ``uniform Artin-Rees'' was used for the first time, and an example in a non-excellent ring was given to  show that some condition on the ring is required in order to find a  uniform Artin-Rees number that works for all maximal ideals. In fact, for excellent rings, Duncan and O'Carroll in \cite{DO89} obtained a uniform  Artin-Rees number for which  equation \ref{ARequal}  holds  for every maximal ideal, answering Eisenbud and Hochster's question. Their work was preceeded by O'Carroll, who proved in \cite{OCar87} that an integer $h$ can be chosen to uniformly satisfy inclusion \ref{ARcontain} for all maximal ideals of an excellent ring.  
In  \cite{Hune-92}, Huneke showed that, 
  given the inclusion $M \subseteq N$, the integer $h$ in inclusion \ref{ARcontain} can be chosen
{\it independently} of the ideal $I$ under mild assumptions on the ring,
 for example when the ring is essentially of finite type over a local ring (by ``local'' we include the condition of being Noetherian).  Huneke's result gives a lot of information even in a local ring, whereas the O'Carroll and Duncan/O'Carroll results have content only in rings with infinitely many maximal ideals.  Our focus in this paper will remain on local rings, with an emphasis on uniform results over the set of all ideals.

 The impetus for our results is a ``uniform Artin-Rees'' problem   raised by
  Eisenbud and Huneke in \cite{EisHun-02}, concerning uniform
  bounds of Artin-Rees type on free resolutions.
 Let $M$ be a finitely generated $R$-module and let
  $(F_{\bullet}, \partial_{\bullet})$ be a
free resolution  of   $M$ by finitely generated free modules:
\[
\xymatrix{
\dots \ar[r] &F_{i+1} \ar[r]^{\partial_{i+1}} &F_{i} \ar[r] &\dots \ar[r]& F_1 \ar[r]^{\partial_1} & F_0 \ar[r] &M \ar[r]& 0.}
\]
  We define a module $M$  to be  {\it syzygetically Artin-Rees}
  with respect to a family of ideals $\mathcal{I}$, if there exists a uniform 
   integer $h$ such  that 
for every $n\ge h$,  for every $i\ge 0$, and for every $I \in \mathcal{I}$,
\begin{eqnarray}\label{syzyg}
I^nF_i\cap \Im{ \partial_{i+1}}\subseteq I^{n-h}\Im{ \partial_{i+1}}.
\end{eqnarray}
This definition does not depend on the free  resolution (see for example Lemma 2.1 in \cite{Stri-06b}).  Eisenbud and Huneke raise the question (Question B in \cite{EisHun-02}) of whether or not, given a local ring $(R,\m)$ and an ideal $I \subseteq R$, every module is syzygetically Artin-Rees with respect to $\mathcal{I} = \{I\}$.

 In \cite{EisHun-02}, the authors show that given a local ring $(R,\m)$, a module $M$ is syzygetically Artin-Rees with respect to an ideal $I \subseteq R$, if $M$ has finite projective dimension and constant rank on the punctured spectrum.  In \cite{Stri-06b}, the third author of this paper observed that much stronger statements could be made if one looks at high enough syzygies.
 She proved that every second syzygy module in a local ring of dimension at most two has a certain uniform annihilation property with respect to the family of $\m$-primary
  ideals (see Theorem 5.4 of \cite{Stri-06b}).  Although it is never formally stated in her paper, one can deduce from her results that in a local ring of dimension two, every finitely generated second syzygy is syzygetically Artin-Rees with respect to the set of $\m$-primary ideals, and that the integer $h$ in inclusion \ref{syzyg} can be chosen independently of the module $M$, i.e., we have a {\it double uniform} Artin-Rees theorem. This leads us to an extended definition: We say that a family of modules $\mathcal{M}$ is syzygetically Artin-Rees with respect to a family of ideals $\mathcal{I}$ if there exists an $h$ such that inclusion \ref{syzyg} holds for every module of $\mathcal{M}$ and every ideal of $\mathcal{I}$.
It is in this form that we prove our main theorem.
  
\begin{ipg}{\bf{Main Theorem.}}\label{MainTheorem} 
Let $\Rm$ be a local Noetherian ring of dimension $d$. Then, the family of finitely generated $d$th syzygy modules is
  syzygetically Artin-Rees with respect to the family of all ideals. 
 \end{ipg}	
	
  It quickly becomes apparent, when considering this problem over infinite sets of ideals and modules, that no uniform statement is possible when we consider the beginning of a resolution, even in very nice rings.  Suppose that $(R,\m)$ is Cohen-Macaulay (or even regular) and $x_1,\ldots, x_d$ is a system of parameters.  The Koszul complex $K_\bullet(x_1^t,\ldots, x_d^t;R)$ is a free resolution of the module $R/(x_1^t,\ldots, x_d^t)$.  This complex will have Artin-Rees number at least $t$ for the ideal $(x_1,\ldots, x_d)$.  But there {\it is} a uniform behavior for all such complexes past $d$, since all higher syzygies are zero.  
	
%	We will show, in fact, that {\it all} local rings behave well for all high enough syzygies.  We show (see Theorem~\ref{mainthm})
 
 The syzygetically Artin-Rees property is intertwined with finding uniform annihilators for homology modules.
In fact, given a free resolution $(F_{\bullet}, \partial_{\bullet})$ of an $R$-module $M$, there is an isomorphism 
\begin{eqnarray}\label{torUAR}
\frac{I^nF_{i-1} \cap \Im(\partial_{i})}{I^n\Im \partial_{i}} \cong \Tor{i}{R/I^n}{M}, \quad \text{for every $i \ge 1$}.
\end{eqnarray}

Our work to find {\it uniform} annihilator elements for the homology module displayed above, is inspired by the ideas contained  in the monograph \cite{HoHu-93}, which also relate to important homological conjectures. In this monograph, the authors show 
that elements that annihilate higher homology modules  of Koszul complexes on  sequences that are  part of a system of parameters, can often, after taking a power, annihilate higher homology modules of a much wider class of complexes.   Huneke's proof of his uniform Artin-Rees and uniform Brian\c con-Skoda theorems also relies heavily on such elements.
 
 In order to prove our Main Theorem, given a local ring $(R,\m)$, we need a {\it sequence }of elements which annihilate higher Koszul homology modules of parameters.  As we need this sequence to itself form a system of parameters, we are forced to choose elements that annihilate higher Koszul homology of part of system of parameters of appropriate length. 
This leads us to introduce the notion of {\it Koszul annihilating sequence} (KAS), whose precise definition, Definition~\ref{KASdef}, is given in Section 2, together with the proof of its  existence,  Theorem~\ref{KASexistence}.  We believe that KAS sequences may offer a significant tool in problems where controlling higher homology modules for classes of free complexes is important.
 
Section 3 is devoted to showing how Koszul annihilating sequences uniformly annihilate the homology modules of a much larger class of complexes. %, which were also introduces in \cite{HoHu-93}. 

Finally, in Section 4, after proving several lemmas,  we give the proof of Theorem \ref{mainthm} of which the Main Theorem is a corollary.

% In order to show that KAS's exist, we rely on a variant of Theorem 2.16 of Hochster and Huneke's paper \cite{HoHu-93} (cited in this paper as result \ref{annkoszul}).  Their theorem allows one to use the knowledge that if $R$ has an element $c$ which annihilates certain local cohomology modules of a finitely generated module $M$, then there is a power of $c$ which annihilates certain higher Koszul homology modules on $M$ for elements satisfying a height condition on the module $M$.  We give a variant of this result, Theorem~\ref{altannkoszul}, where we relax this height condition on $M$, at the cost of stronger assumptions on the ring.  This cost, however, is not much of a cost because we can apply the theorem after completing, and we can then view (the completion of) $M$ as a module over a Gorenstein ring.

%%%%%%%%%%%%%%%%%%%%%%%%%%%%%%%%%%%%%%%%%%%%%%
%%%%%%%%%%%%%%%%%%%%%%%%%%%%%%%%%%%%%%%%%%%%%%%%
%%%%%%%%%%%%%%%%%%%%%%%%%%%%%%%%%%%%%%%%%%%%%%%%

%%%%%%%%%%%%%%%%%%%%%%
%%%%%%%%%%%%%%%%%%%%%%
%SECTION 1: KAS
%%%%%%%%%%%%%%%%%%%%%%
%%%%%%%%%%%%%%%%%%%%%%

\section{Koszul annihilating sequences}
Throughout this section,  $R$  will denote  a Noetherian ring of dimension $d$.
Let  $\xx=x_1, \dots, x_n $ be a sequence of elements in $R$, and let  $M$ be an $R$-module. We denote the Koszul complex on the sequence $\xx$  and the module $M$ by $K_{\bullet}(x_1,\dots, x_n; M)$, and we denote its $i$th homology by $\ho_i(x_1,\dots, x_n; M)$.
 
 This section is devoted to showing, given a local ring $(R,\m)$,  the existence of a system of parameters, which we call a {\it Koszul annihilating sequence} (or KAS),  whose elements uniformly annihilate higher homology modules of   the Koszul complex  on sequences that are part of a system of parameters against all  modules that are high syzygies.  The precise definition of KAS is given in Definition~\ref{KASdef}, and the existence of such a sequence for the class of $d$th szyzygies is the content of Theorem~\ref{KASexistence}.
 
As mentioned in the introduction, our work is inspired by the monograph of Hochster and Huneke. The first result we need to modify is the following which is
 Theorem (2.16) of \cite{ HoHu-93}.  For a finitely generated module $M$, the height of $I$ on $M$, $\hgt[M]I$, is the height of the ideal $I$ in the ring $R/\Ann[R]M$.
\begin{ipg}\label{annkoszul} 
Let $R$ be a Noetherian ring, not necessarily local. 
Let $x_1, \dots, x_n$ be elements in the ring $R$
 and suppose that $M$ is a finitely generated 
 $R$-module  of dimension $d'$ such that the inequality $\hgt[M](x_1, \dots, x_i) \ge i$ is satisfied  for all  $i=1, \dots, n$. 
 Let $c$ be an element of $R$ such that, for $i=0, \dots, n-1$, the equality 
  $c \ho^{i}_{\p R_{\p}}(M_{\p})=0 $ holds for any prime ideal $\p$ containing the elements
   $x_1, \dots, x_{i+1}$. Then the equality  $c^{E(d', n, t)}\ho_{n-t}(x_1, \dots, x_n; M)=0$ holds 
    for $0 \le t \le n-1$, where, for integers
   $\delta \ge \nu > \tau \ge 0$, the function $E(\delta, \nu, \tau)$ is defined recursively
    as follows:
   $$
 \begin{cases}
 E(\delta, \nu, 0)=\delta-\nu+1\\
  E(\delta, \nu, \tau)=\delta+ (\delta+2)E(\delta, \nu-1, \tau-1), \quad \tau\ge 1.
  \end{cases}
  $$

\end{ipg}

We provide here an alternative version of result~\ref{annkoszul},
 with essentially the same proof as given in \cite{HoHu-93}.  
 With stronger hypotheses on $R$, we may look only at the 
 height of the sequence on $R$ (not on the module $M$), 
 and we may drop the dimension assumption on $M$. This
  is valuable for our applications to syzygies
  as we do not always know that syzygies have 
  the full dimension of the ring. On the other hand, the hypotheses on the ring will be satisfied as we will apply our theorem to Gorenstein rings.

In our  case, we  define a new function $E_1(\delta, \nu, \tau)$ recursively by
$$
 \begin{cases}
 E_1(\delta, \nu,0)= \delta-\nu+1\\
  E_1(\delta, \nu, \tau)=\delta+ (\delta+2)E_1(\delta-1,\nu-1, \tau-1), \quad \tau\ge 1.
  \end{cases}
  $$

\begin{thm}\label{altannkoszul}
Let $R$ be a Noetherian ring of dimension $d$ which
 is catenary and  locally equidimensional 
 at each maximal ideal. 
  Let $\xx = x_1,\ldots, x_n $ be a sequence of elements of $R$ for which the inequality
   $\hgt (x_1,\ldots, x_i)R \ge i$ holds for all integers $i$ with $1 \le i \le n$. 
    Let $M$ be a finitely generated $R$-module, and 
    let $c \in R$ be an element such that the equality
     $c\ho^i_{\p R_\p}(M_\p)=0$ holds
      for any prime ideal $\p$ containing  the sequence 
      $x_1,\ldots, x_{i+1}$ and  for $0 \le i \le n-1$.
      
       Then, the equality 
$c^{E_1(d,n,t)}\ho_{n-t}(\xx; M) = 0$ holds for $0 \le t \le n-1$.
\end{thm}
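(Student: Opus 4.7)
The plan is to follow the proof of Theorem~(2.16) in \cite{HoHu-93}, making the minor but crucial adjustment that we replace $\operatorname{dim} M$ by $d=\operatorname{dim} R$ throughout the bookkeeping. The justification for this substitution is that, since $R$ is catenary, every prime $\p$ of $R$ satisfies $\operatorname{ht}(\p)+\operatorname{dim}(R/\p)\le d$; in particular, every prime containing a sequence of height $\ge n$ has coheight at most $d-n$. The argument proceeds by induction on $t$.

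For the base case $t=0$, $\ho_n(\xx;M)=(0:_M\xx)$. For any $m$ in this module, the cyclic submodule $N=Rm$ has every associated prime containing $(\xx)$, so of height $\ge n$, and hence $\operatorname{dim}(N)\le d-n$. The hypothesis with $i=0$ gives $c\cdot\ho^{0}_{\p R_\p}(M_\p)=0$ for every $\p\supseteq(x_1)$, which covers every $\p$ in $\operatorname{Supp}(N)$. For $\p$ minimal in $\operatorname{Supp}(N)$, $N_\p$ is $\p R_\p$-torsion and thus contained in $\ho^{0}_{\p R_\p}(M_\p)$, so $cN_\p=0$. Therefore $\operatorname{Supp}(cN)$ omits every minimal prime of $\operatorname{Supp}(N)$, which by the catenary inequality forces $\operatorname{dim}(cN)\le\operatorname{dim}(N)-1$. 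Iterating $\operatorname{dim}(N)+1$ times yields $c^{d-n+1}m=0$, as required.

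For the inductive step $t\ge 1$, set $\ovl{\xx}=x_1,\ldots,x_{n-1}$. From the mapping cone of multiplication by $x_n$ on the Koszul complex of $\ovl{\xx}$ comes a long exact sequence that extracts the short exact sequence
\[
0\longrightarrow \frac{\ho_{n-t}(\ovl{\xx};M)}{x_n\,\ho_{n-t}(\ovl{\xx};M)}\longrightarrow \ho_{n-t}(\xx;M)\longrightarrow \bigl(0:_{\ho_{n-t-1}(\ovl{\xx};M)}x_n\bigr)\longrightarrow 0.
\]
Relative to the length-$(n-1)$ sequence $\ovl{\xx}$, the two outer terms correspond to Koszul indices $t-1$ and $t$. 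Apply the inductive hypothesis to these. To match the recursion, which reduces the dimension parameter to $d-1$, perform an auxiliary reduction to a catenary, locally equidimensional quotient ring of dimension at most $d-1$, carrying along the appropriately truncated sequence and checking that the hypothesis on $c$ descends; the $(d+2)$-factor in the recursion then accumulates from bookkeeping this reduction together with the multiplicative accumulation of annihilators across the short exact sequence.

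The chief obstacle is arranging the dimension-reduction step cleanly so that all hypotheses --- catenarity, local equidimensionality, the height condition on the truncated sequence, and the local-cohomology annihilation by $c$ --- descend to the quotient, and so that the extra powers of $c$ needed to correct for the reduction combine correctly with those from the short exact sequence to produce exactly the exponent $d+(d+2)E_1(d-1,n-1,t-1)$. Since the authors describe the argument as essentially that of \cite{HoHu-93}, the main task is careful translation rather than reinvention; the genuinely new ingredient is the catenary height-coheight inequality, which is what lets us bypass any hypothesis on $\operatorname{dim} M$.
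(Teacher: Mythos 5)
Your base case ($t=0$) is fine and is essentially the Hochster--Huneke support-shrinking argument (the paper phrases it instead as an induction on $d$ via the punctured spectrum, but the two are equivalent; note only that the inequality $\operatorname{ht}\p+\dim R/\p\le d$ needs local equidimensionality as well as catenarity, both of which you do have). The genuine gap is the inductive step, which is where all the content of the theorem lives, and your sketch both sets it up in a way that cannot produce the stated bound and defers the one idea that makes it work. Stripping $x_n$ off the end gives the short exact sequence you wrote, whose outer terms sit at Koszul indices $t-1$ \emph{and} $t$ for the length-$(n-1)$ sequence $x_1,\ldots,x_{n-1}$. Feeding both into an inductive hypothesis would yield a recursion with two recursive calls, roughly $E_1(\cdot,n-1,t-1)+E_1(\cdot,n-1,t)$, which is not the stated recursion $E_1(d,n,t)=d+(d+2)E_1(d-1,n-1,t-1)$ (one recursive call, with the dimension dropped); worse, when $t=n-1$ the right-hand term involves $\ho_0(x_1,\ldots,x_{n-1};M)$, which the theorem does not control at all. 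No amount of "bookkeeping" fixes a skeleton whose shape disagrees with the target recursion.

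The missing idea is to strip $x_1$ off the \emph{front} instead. Set $N=(0:_Mx_1)=\ho_1(x_1;M)$; the case $t=0$, $n=1$ gives $c^dN=0$ outright, so $c^d$ kills every Koszul homology module of $N$ -- this is how one of the two outer terms is disposed of without any inductive call, and it is where the summand $d$ in the recursion comes from. The long exact sequence then relates $\ho_{n-t}(\xx;M)$ to $\ho_{n-t-1}(x_2,\ldots,x_n;N)$ (killed by $c^d$) and $\ho_{n-t}(x_2,\ldots,x_n;M/x_1M)$, the latter at index $t-1$ for a length-$(n-1)$ sequence over the ring $R/x_1R$, which is catenary, equidimensional and of dimension $d-1$. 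The only remaining task -- your unspecified "auxiliary reduction" -- is to verify that a power of $c$ satisfies the hypothesis of the theorem for $M/x_1M$: the two short exact sequences $0\to N\to M\to x_1M\to 0$ and $0\to x_1M\to M\to M/x_1M\to 0$ give two long exact sequences of local cohomology over each relevant $R_\p$, showing first that $c^{d+1}$ kills $\ho^i_{\p R_\p}((x_1M)_\p)$ and then that $c^{d+2}$ kills $\ho^i_{\p R_\p}((M/x_1M)_\p)$ in the required range; this is the source of the factor $d+2$. Applying the inductive hypothesis with $(R/x_1R,\,M/x_1M,\,x_2,\ldots,x_n,\,c^{d+2},\,d-1,n-1,t-1)$ and combining with $c^dN=0$ yields exactly $c^{d+(d+2)E_1(d-1,n-1,t-1)}=c^{E_1(d,n,t)}$. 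As written, your proposal does not contain this argument.
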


\begin{proof}
We use induction on $d$, $n$, and $t$.  
The property of being equidimensional at maximal ideals  localizes and 
the statement  is a local one on $R$ at maximal ideals. Moreover, if
 $\m$ is a maximal ideal of $R$ then $\hgt \m \le d$. 
  Since for $d' \le d$ we have $E_1(d', n, t) \le E_1(d, n, t)$, without
 loss of generality we may assume that $(R,\m)$ is a local Noetherian catenary and
    equidimensional ring of dimension $d$.

If $t=0,$ we shall show that $\ho_n(\xx;M)$ is annihilated  by $c^{d-n+1}$. 
 We can assume that $n \ge 1$. 
If $n > d$, then $(\xx)R = R$ and all Koszul homologies vanish.
 If $d =n$, then $(\xx)$ generates an $\m$-primary ideal and 
 $\ho_n(\xx;M) \subseteq \ho^0_\m (M)$, which  is annihilated by $c$  by hypothesis. 
  We may thus assume that $n < d$.  Consider $c^{d-n}\ho_n(\xx;M)$.  
  By the induction hypothesis on $d$, this module vanishes on the punctured spectrum.  
  The module is also a submodule of $M$, 
  and hence $c^{d-n}\ho_n(\xx;M) \subseteq \ho^0_\m(M)$.  
  By hypothesis, $c$  is in the annihilator of the module $c^{d-n}\ho_n(\xx;M)$.
 
 We now assume that $t>0$. 
 Let $N = \ann[M](x_1) = \ho_1(x_1;M)$. 
  We may apply the case $t=0$ and $n =1$
  to see that $c^d$ is in the annihilator of the module $N$.  
  Consider the two exact sequences 
  \begin{equation}\label{X2}
  0 \to N \to M \to M' \to 0, \text{ and}
  \end{equation}
   \begin{equation}\label{X3}
   0\to  M' \to M \to M/x_1M \to 0, \; \text{ where} \;M' = x_1M \cong M/ N.
   \end{equation} 
 
  Let $r$ be an integer with $0 \le r \le n-1$ and $\p$ a prime ideal containing $x_1,\ldots, x_{r+1}$.
  
  The  short exact sequence \ref{X2}  yields the following long exact sequence:
\begin{equation}\label{XO}
\cdots \to \ho^i_{\p R_\p}(M_\p) \to H^i_{\p R_\p}(M'_\p) \to \ho^{i+1}_{\p R_\p}(N_\p) \to \cdots.
\end{equation}
Since the equality $c^dN=0$ holds, then  $c^d$ is in 
the annihilator of the third module displayed in
 \ref{XO}  for all $i \ge 0$;  by hypothesis 
 $c$ annihilates  the first module displayed in
  \ref{XO} for  all $i \le r$.  Thus, for $i \le r$, $c^{d+1}H^i_{\p R_\p}(M'_\p)=0$.

In particular, $c^{d+1}$ kills the third term displayed below for all $i<r$ 
\begin{equation}\label{X1}
\cdots \to \ho^i_{\p R_\p}(M_\p) \to \ho^i_{\p R_\p}((M/x_1M)_{\p}) \to \ho^{i+1}_{\p R_\p}(M'_\p) \to \cdots,
\end{equation}
where the long exact sequence comes from the  short exact sequence \ref{X3}.
 Hence, for all $i < r$, $c^{d+2}$ kills the middle term in \ref{X1},
  since by hypothesis $c \ho^i_{\p R_\p}(M_\p)=0$  for $i\le r$.

Because $R$ is equidimensional and catenary of dimension $d$, the ring
 $R/x_1R$ is equidimensional and catenary of dimension $d-1$, 
 and $\hgt\left((x_2,\ldots, x_{i+1})(R/x_1R)\right)\ge i$.  
 We apply the induction hypothesis 
 with $M/x_1M$ replacing $M$, $R/x_1R$ replacing
  $R$, $x_2,\ldots, x_n$ replacing the original sequence,
   $c^{d+2}$ replacing $c$, and $d-1$, $n-1$, $t-1$
    replacing $d$, $n$, and $t$ respectively
  to conclude that $c^{(d+2)E_1(d-1, n-1, t-1)}$ kills $\ho_{n-t}(x_2,\ldots, x_n; M/x_1M)$.

Finally, from a spectral sequence there is a long exact sequence
\begin{equation*}
\cdots \to \ho_{i-1}(x_2,\ldots, x_n; N) \to \ho_i(\xx;M) \to \ho_i(x_2,\ldots, x_n; M/x_1M) \to \cdots.
\end{equation*}
Let $i = n-t$.  Since $c^d$ annihilates  the first displayed term and $c^{(d+2)E_1(d-1, n-1, t-1)}$
 annihilates the third term, we obtain that $c^{E_1(d, n, t)} \ho_i(\xx;M)=0$, as required.
\end{proof}

We are now ready to give the definition of a Koszul annihilating  sequence.

\begin{dfn}\label{KASdef}
Let $(R,\m)$ be a local ring of dimension $d$ and let $M$ be a finitely
 generated $R$-module.  Given an integer $i$ such that $i \le d$, a sequence of elements, 
 $c_i, c_{i+1}, \ldots, c_d\in \m $ is a {\it Koszul annihilating sequence}  
 ($\kas$) for $M$ (of length $d-i+1$), if 
  \begin{enumerate}
 \item  the elements $c_i, \ldots, c_d$ are part of a system of parameters for $R$, and
 \item for all integers $k$ such that 
 $d\ge k \ge i-1$, if $x_1,\ldots, x_k, c_{k+1}, \ldots, c_d$
  are a  system of parameters for $R$,
          then the equality
          $$c_v H_n (x_1,\ldots, x_k, c^t_{k+1}, \ldots, c^t_j; M) = 0$$
          holds for all $v \ge j \ge k \ge 1$ and for all $n, t\ge 1$.
 \end{enumerate} 

If $\mathcal{M}$ is a family of finitely generated $R$-modules, and
 $c_i,\ldots, c_d$ is a $\kas$ for all $M \in \mathcal{M}$, then we will say that
  the sequence is a $\kas$ for $\mathcal M$.
\end{dfn}

\begin{rmk}  It is clear from the definition that if we modify 
a $\kas$ by replacing each element by  a power of 
itself, we still have a $\kas$.

Moreover a $\kas$ of length $\dimR$ is a system of parameters for $R$.
\end{rmk}

Theorem \ref{KASexistence} below shows that  for any local ring, it is always possible to find a
 $\kas$ of length $d$  for the class of modules which are $d$th syzygies.
  Before proving the theorem, we introduce some ideals that will be used in the proof.

\begin{ipg} \label{setup}
Let $(R,\m)$ be a local ring.  For an integer $i$ such that $0 \le i < d$, 
 set $\aa_i = \aa_i(R) = \Ann{H^i_\m(R)}$ and $\bb_i = \bb_i(R) = \aa_0(R) 
\cdots \aa_i(R)$.  It is a simple induction to see that if $M$ is a $j$th 
syzygy then for $0 \le i <j$, $\bb_i \subseteq \Ann{H^i_\m(M)}$. 
 In particular, for every $d$th syzygy, and for every $0 \le i <d$,
  $\bb_i \subseteq \Ann{H^i_\m(M)}$.   We also note that 
  $\bb_0 \supseteq \bb_1 \supseteq \cdots \supseteq \bb_{d-1}$ and $\dim[]R/\bb_i \le i$.
\end{ipg}

\begin{thm} \label{KASexistence} 
Let $(R,\m)$ be a local ring of dimension $d$.  
Then, there exists a $\kas$ sequence, $c_1,\ldots, c_d$,
 for the family of finitely generated modules which are $d$th syzygies over $R$.

Moreover,  the $\kas$ sequence $c_1, \dots, c_d$ can be chosen such that 
 the following holds
 \[ \dim[]R/(0:_R(0:_Rc_{d-i})) \le d-i-1,
 \]
  for all $1 \le i \le  d-1$. 
\end{thm}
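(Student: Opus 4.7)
The plan is to construct the sequence $c_1,\ldots,c_d$ by descending induction on $v$, so that each $c_v$ simultaneously (i) extends the already-chosen $c_{v+1},\ldots,c_d$ to a system of parameters of $R$, (ii) satisfies the Koszul annihilation condition in Definition \ref{KASdef}, and (iii) fulfils the dimension constraint in the ``Moreover'' clause for $v \le d-1$. Before beginning, one reduces to the hypotheses of Theorem \ref{altannkoszul}: passing to the completion $\hat R$ is harmless by faithful flatness and makes the ring catenary, and the equidimensionality issue can be handled by treating one minimal prime component at a time.

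The driving idea is to exploit the ideals $\bb_0 \supseteq \bb_1 \supseteq \cdots \supseteq \bb_{d-1}$ from Setup \ref{setup}: they depend only on $R$, yet $\bb_i \subseteq \Ann H^i_{\m}(M)$ for every $d$th syzygy $M$ and every $i<d$, and $\dim R/\bb_i \le i$. At the inductive step I would choose $c_v \in \bb_{v-1}$ by prime avoidance so as to miss (a) the minimal primes $\p$ of $(c_{v+1},\ldots,c_d)$ with $\dim R/\p = v$ (so $c_v$ extends the partial system of parameters), and (b) when $v\le d-1$, the associated primes $\p$ of $R$ with $\dim R/\p \ge v$ (which forces $\dim R/(0:_R(0:_R c_v)) \le v-1$). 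Every prime in either family satisfies $\dim R/\p \ge v$, so since $\dim R/\bb_{v-1}\le v-1$ none of them contains $\bb_{v-1}$ and prime avoidance succeeds.

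With the sequence in hand, the Koszul annihilation $c_v\,H_n(x_1,\ldots,x_k,c_{k+1}^{t},\ldots,c_j^{t}; M)=0$ required in Definition \ref{KASdef} is deduced from Theorem \ref{altannkoszul} applied to the length-$j$ sequence. The height hypothesis there is exactly the partial system of parameters hypothesis we have, and the local annihilation hypothesis --- that $c_v$ kills $H^i_{\p R_{\p}}(M_{\p})$ at every prime $\p$ containing the first $i+1$ elements of the sequence --- follows from $c_v \in \bb_{v-1}$ together with the observation that $M_{\p}$ remains a sufficiently high syzygy over $R_{\p}$ for the appropriate $\bb$-ideal of $R_\p$ to annihilate the relevant local cohomology. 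Since \ref{altannkoszul} only yields annihilation by a fixed power $c_v^{E_1(d,j,t)}$, at the outset one replaces each $c_i$ by a large enough common power; by the Remark after Definition \ref{KASdef}, this still produces a KAS.

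The main obstacle is this last point: translating global membership $c_v \in \bb_{v-1}(R)$ into the required local annihilation at non-maximal primes $\p$. This forces one to track how the ideals $\aa_i$ interact with localization for $d$th syzygies, and is where the bulk of the technical work lies.
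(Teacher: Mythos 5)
Your construction of the sequence itself---picking $c_v \in \bb_{v-1}$ by prime avoidance so as to dodge both the relevant primes of $R/(c_{v+1},\ldots,c_d)$ and the associated primes of $R$ of dimension at least $v$, then raising everything to a common power at the end---is exactly the paper's, and the ``Moreover'' clause falls out of the same avoidance of associated primes. The gap is in the step you yourself flag as ``where the bulk of the technical work lies'': verifying the local hypothesis of Theorem~\ref{altannkoszul}, namely that $c_v$ annihilates $H^i_{\p R_\p}(M_\p)$ for every prime $\p$ containing the first $i+1$ elements of the sequence. You propose to obtain this by showing that $M_\p$ is a sufficiently high syzygy over $R_\p$ and that membership in $\bb_{v-1}$ passes to the corresponding $\bb$-ideal of the localization. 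That route does not work as stated: the ideals $\aa_i(R)=\Ann{H^i_\m(R)}$ are defined by local cohomology at the \emph{maximal} ideal, and \ref{setup} controls only $H^i_\m(M)$, not $H^i_{\p R_\p}(M_\p)$; there is no containment of $\aa_i(R)R_\p$ in the $\aa$-ideals of $R_\p$ for the indices you would need. Left at this level the proof is incomplete.

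The paper closes this gap by a different mechanism, which is also where the catenary and equidimensionality hypotheses of Theorem~\ref{altannkoszul} actually get arranged (your suggestion to ``treat one minimal prime component at a time'' does not work either, since neither $M$ nor the syzygy condition decomposes along minimal primes of $\Rhat$). One completes $R$, chooses a Gorenstein local ring $(S,\n)$ of dimension $d$ surjecting onto $\Rhat$, lifts the $c_i$ and the $x_i$ to a system of parameters of $S$, and applies Theorem~\ref{altannkoszul} over $S$, which is Cohen--Macaulay and hence catenary and equidimensional. Local duality over $S$ identifies $H^i_{\p S_\p}(M_\p)$, up to Matlis duality, with $\operatorname{Ext}_S^{\operatorname{ht}\p - i}(M,S)_\p$, whose global version is dual to $H^{d-\operatorname{ht}\p + i}_\n(M)$---a local cohomology module at the maximal ideal, which the $\bb$-ideals do annihilate. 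Then either $v \ge d-\operatorname{ht}\p+i+1$, in which case $c_v\in\bb_{v-1}\subseteq\bb_{d-\operatorname{ht}\p+i}$ kills this module; or $\operatorname{ht}\p < d-v+i+1$, in which case a parameter count in $S$ shows that $\p$ cannot contain $y_1,\ldots,y_{i+1}$ together with $c_d,\ldots,c_{d-(\operatorname{ht}\p-i-1)}$, so the localized Ext module vanishes outright. This duality-plus-height-count argument over a Gorenstein presentation is the missing idea; without it, or a genuine substitute, your proof does not close.
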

\begin{proof}

We begin by choosing $c'_d \in \bb_{d-1} - \bigcup_{\p \in \AsshR} \p$, 
where $\AsshR$ denotes the set of associated prime 
ideals  $\p$ of maximal dimension, i.e.,   $\dim[]R/\p =\dimR$. 
This choice is possible, since $\dim[]R/\bb_{d-1} \le d-1$  by \ref{setup},
 and therefore, by prime avoidance,
 $\bb_{d-1}$ is not contained in 
 any minimal prime of $R$ of maximal dimension.  
 
 Inductively, having chosen $c'_d, \ldots, c'_{i+1}$,  we pick 
\begin{equation*}
c'_i \in \bb_{i-1} -\bigcup\left(\{\p \mid {\p \in \Assh{R/(c'_{i+1},\ldots, c'_d)}}\} \cup\{\p \mid \p \in \AssR, \dim R/\p \ge i\}\right).
\end{equation*}
We can avoid all the above primes in question
 since $\dim[]R/\bb_{i-1} \le i-1$ by \ref{setup}. 

 For $1 \le i \le d$, set $c_i = (c'_i)^{E_1(d,d,d-1)}$.

The elements $c_d, \ldots, c_1$  form a system of parameters
 in $R$, and hence in $\Rhat$.  Let $(S, \n)$ be a Gorenstein 
 ring of dimension $d$ mapping onto $\Rhat$, say $\Rhat = S/I$.  
 Again, using prime avoidance, we may lift $c_1,\ldots, c_d$ to 
 elements in $S$ which form a system of parameters in $S$. 
  Abusing terminology, we will continue to call the elements
   $c_1,\ldots, c_d$.  For all $\Rhat$ 
   modules,  the action of the lifted elements is the same as the action of the $c_i$'s.

Let $M$ be a finitely generated $d$th $\Rhat$-syzygy. 
 Suppose that $x_1, \ldots, x_k \in S$ are elements such 
 that $x_1,\ldots, x_k, c_{k+1}, \ldots, c_d$ are a system of parameters in $S$.   
  We want to show that, given  integers $v \ge j \ge k$ and $t, n \ge 1$, the equality
$c_v H_n(x_1,\ldots, x_k, c_{k+1}^t, \ldots, c_j^t; M) = 0$ holds. 

 By Theorem~\ref{altannkoszul}, it suffices to show that the equality 
 $c'_vH^i_{\p S_{\p}}(M_{\p})=0$ holds  whenever the index $i$ satisfies $0 \le i \le  j-1$ and the prime ideal  $\p \in \Spec(S)$ is such that 
 $(y_1, \ldots, y_{i+1}) \subseteq \p$, where $y_m = x_m$ for $1 \le m \le k$ and $y_m = c_m^t$ otherwise.

Fix an integer  $i \in \{0, \dots, j-1\}$, by local duality the
 cohomology module $H^i_{\p S_\p}(M_\p)$ is dual to $\Ext [S] {\hgt [] \p - i} {M}{S}_\p$; further
the module $ \Ext [S] {\hgt [] \p - i} M S$ 
is dual to the cohomology module $H^{d- \hgt [] \p + i}_\n (M)$.  
 If $v \ge d - \hgt[] \p +i +1$ then, using \ref{setup}, 
  the equality $c_v H^{d- \hgt [] \p+ i}_\n (M)=0$ holds,  
  as the element  $c_v \in \bb_{v-1}\subseteq \bb_{d- \hgt [] \p + i}$.
 Thus we may assume that $v < d - \hgt[]\p + i + 1$,
  or, equivalently, that $\hgt[]\p < d-v+i+1$.  
  We claim that the ideal $\p$ does not contain all the elements
   $c_d, \ldots, c_{d - (\hgt[]\p -i -1)}$. Indeed if $\p$ would
    contain these elements as well as $y_1,\ldots, y_{i+1}$, 
    then $\p$ contains at least $\hgt[]\p+1$ elements from a system
     of parameters for the ring $S$, contradicting the Gorenstein assumption on $S$.
  
Therefore $\p$ does not contain all the elements
$c_d, \ldots, c_{d - (\hgt[]\p -i -1)}$ so that
 the module $ \Ext [S] {\hgt [] \p - i}{ M }{S}_\p$ vanishes, as  its annihilator contains the elements $c_d, \ldots, c_{d - (\hgt[]\p -i -1)}$.
% The condition on the height of $P$ 
% suffices to assure us that the union of this set of elements is part
%  of a system of parameters for $S$, but the union contains
%   $\hgt[]P + 1$ elements, a contradiction.  
   Hence the hypotheses of Theorem~\ref{altannkoszul} hold.  In particular, this shows that $c_1,\ldots, c_d$ is a KAS for $\Rhat$.

Note that the elements $c_1,\ldots, c_d$ live in $R$.  Because  $\Rhat$ is faithfully flat over $R$, if $M$ is a finitely generated $d$th syzygy over $R$ then $M \otimes_R \Rhat$ is a $d$th syzygy over $\Rhat$.  Also, $-\otimes_R \Rhat$ commutes with taking Koszul homology so the vanishing condition over $\Rhat$ implies the same vanishing over $R$.  Thus $c_1,\ldots, c_d$ is a KAS over $R$.

Since we picked each $c_i$ to avoid appropriate associated primes of $R$, 
we also have that $\dim[]R/(0:_R(0:_R c_{d-i})) \le d-i-1$ for each $i$.
 \end{proof}
$\kas$ sequences  annihilate homology modules of Koszul complexes on every sequence $\xx$ (of appropriate length) that  is part of a system of parameters. However, in Lemma \ref{celimination}, we will need to carefully choose  sequences $\xx$ with  properties that are captured by the following definition.

\begin{dfn} Let $\cc = c_1,\ldots, c_d$ be a KAS for
  a family of modules $\mathcal{M}$. 
 A system of parameters $\xx = x_1,\ldots, x_d$ is said to be
  {\it well-suited to} $\cc$ if for all $1 \le i \le j \le d$,  any
   subset of cardinality $d-(j-i+1)$ of the sequence $\xx$ together with
  the sequence $ c_i, \ldots, c_{j}$
   is   a system of parameters for $R$.
\end{dfn}

Using this definition (with $j=d$ and $j=d-1$ respectively), the two equalities in the  following corollary are particular cases of Theorem \ref{KASexistence}.
\begin{cor}\label{KASUSED} 
Let $\cc = c_1, \dots, c_d$ be a $\kas$ sequence for the family of modules which are
 $d$th syzygies and let
$x_1, \dots, x_d$ be a well-suited sequence to $\cc$. Then
          $$c_v H_t (x_1,\ldots, x_k, c_{k+1}, \ldots, c_{\ell}; M) = 0,$$
           $$c_v H_t (x_1,\ldots, x_k, c_{k}, \ldots, c_{\ell-1}; M) = 0,$$
 for all  integers
$v \ge  \ell \ge k \ge 1$, for all $t\ge 1$,  and for all $R$-modules $M$ that are $d$th syzygies.
\end{cor}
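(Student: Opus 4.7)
The plan is to deduce both equalities from Definition~\ref{KASdef}(2) (the defining property of a $\kas$), using the well-suited condition on $\xx$ to supply the system of parameters (sop) that activates that definition.

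\emph{First equality.} Applying the well-suited condition with $i = k+1$ and $j = d$, the subset $\{x_1,\ldots,x_k\}$ of $\xx$ has cardinality $d - (d - k) = k$, so $x_1,\ldots,x_k,c_{k+1},\ldots,c_d$ is an sop for $R$. Definition~\ref{KASdef}(2) applies directly: with arbitrary prefix $x_1,\ldots,x_k$, the definition's $j$-parameter set equal to $\ell$, the power set equal to $1$, and the homological degree equal to the corollary's $t$, one obtains $c_v H_t(x_1,\ldots,x_k,c_{k+1},\ldots,c_\ell;M) = 0$ for every $v \ge \ell \ge k \ge 1$.

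\emph{Second equality.} For this case I use the well-suited condition with $i = k$ and $j = d-1$: the subset $\{x_1,\ldots,x_k\}$ still has cardinality $d - (d-1-k+1) = k$, so $x_1,\ldots,x_k,c_k,\ldots,c_{d-1}$ is an sop. This sop does not terminate at $c_d$, so Definition~\ref{KASdef}(2) does not apply verbatim; instead I re-run the argument underlying Theorem~\ref{KASexistence}, applying Theorem~\ref{altannkoszul} in the Gorenstein ring $S$ of that proof to the lifted sequence $\yy = (x_1,\ldots,x_k,c_k,c_{k+1},\ldots,c_{\ell-1})$. The required height bound $\hgt[]{(y_1,\ldots,y_i)S} \ge i$ for $1 \le i \le \ell$ is immediate from the displayed sop (when $i > k$) and from $\xx$ being an sop (when $i \le k$); the cohomological vanishing $c'_v H^i_{\p S_\p}(M_\p) = 0$ for primes $\p \supseteq (y_1,\ldots,y_{i+1})$ with $0 \le i \le \ell-1$ is then verified by the local-duality argument of Theorem~\ref{KASexistence}, exploiting that $M$ is a $d$th syzygy (so $\bb_j$ annihilates $H^j_\n(M)$) and that $c'_v \in \bb_{v-1}$.

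\emph{Main obstacle.} The delicate case is that of primes $\p$ with $\hgt[]{\p} < d - v + i + 1$, where vanishing of the dualized $\operatorname{Ext}_S^{\hgt[]{\p}-i}(M,S)_\p$ must be deduced from the existence of some element of $\{c_{d-\hgt[]{\p}+i+1},\ldots,c_d\}$ outside $\p$. If all such $c$'s lay in $\p$, then together with $y_1,\ldots,y_{i+1}$ they would constitute $\hgt[]{\p}+1$ elements of $\p$ lying inside a single sop of the Cohen--Macaulay ring $S$, contradicting the height of $\p$. For $i+1 \le k$ this sop can be produced from well-suited with parameters $(i,j) = (d-\hgt[]{\p}+i+1,d)$. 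For $i \ge k$ the argument is more delicate: the sop must accommodate $x_1,\ldots,x_k$, the ``low'' block $c_k,\ldots,c_i$ and the ``high'' block $c_{d-\hgt[]{\p}+i+1},\ldots,c_d$ simultaneously. This is achieved by combining the well-suited sop $x_1,\ldots,x_{k-1},c_k,\ldots,c_d$ (from $(i,j)=(k,d)$) with the genericity of $x_k$ encoded by the ``moreover'' clause of Theorem~\ref{KASexistence}, which via $\dim R/(0:_R(0:_R c_{d-i})) \le d-i-1$ keeps $x_k$ out of the unwanted associated primes of the intermediate quotient and allows one to extend the length-$\hgt[]{\p}$ regular sequence by $x_k$ to a regular sequence of length $\hgt[]{\p}+1$.
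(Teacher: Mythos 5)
Your treatment of the first equality is exactly the paper's: well-suitedness with $(i,j)=(k+1,d)$ produces the system of parameters $x_1,\ldots,x_k,c_{k+1},\ldots,c_d$ required by Definition~\ref{KASdef}(2), and the vanishing follows verbatim. For the second equality you have also correctly diagnosed the situation: the justification ``a particular case of Theorem~\ref{KASexistence} with $j=d-1$'' cannot be a literal appeal to Definition~\ref{KASdef}, since the system of parameters $x_1,\ldots,x_k,c_k,\ldots,c_{d-1}$ supplied by well-suitedness does not terminate at $c_d$; one must re-run the proof of Theorem~\ref{KASexistence}, i.e.\ verify the hypotheses of Theorem~\ref{altannkoszul} in the Gorenstein cover $S$ for the sequence $x_1,\ldots,x_k,c_k,\ldots,c_{\ell-1}$. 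Up to and including the local-duality reduction, your argument tracks the source.

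The gap is in your ``main obstacle'' paragraph, in the case $i\ge k$. There the first $i+1$ elements of the sequence are $x_1,\ldots,x_k,c_k,\ldots,c_i$, and to rule out $\p\supseteq(c_{d-\hgt[]{\p}+i+1},\ldots,c_d)$ you need all of $x_1,\ldots,x_k$, the low block $c_k,\ldots,c_i$, and the high block $c_{d-\hgt[]{\p}+i+1},\ldots,c_d$ --- a total of $\hgt[]{\p}+1$ elements --- to sit inside a single system of parameters. Well-suitedness only produces systems of parameters whose $c$-part is one contiguous block $c_a,\ldots,c_b$; to cover both $c$-blocks you must take $a=k$, $b=d$, which leaves room for only $k-1$ of the $x$'s, so every system of parameters that well-suitedness provides contains only $\hgt[]{\p}$ of the required elements --- exactly one short. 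Your proposed fix, invoking ``the genericity of $x_k$'' via the clause $\dim[]R/(0:_R(0:_Rc_{d-i}))\le d-i-1$ of Theorem~\ref{KASexistence}, is not available: that clause constrains the elements $c_j$, not $x_k$, and the corollary assumes nothing about $\xx$ beyond well-suitedness (the extra genericity you are reaching for belongs to the later notion of special reduction, which is not a hypothesis here). So the step ``extend the length-$\hgt[]{\p}$ regular sequence by $x_k$'' is unsupported. To be fair, the paper disposes of this corollary in one sentence and does not engage with this case either, so you have probed deeper than the source; but as written your argument does not close the second equality.
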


\section{Uniform annihilators  of homology}
%%%%%%%%%%%%%%%%%%%%%%%%%%
%%%FROM KOSZUL TO HOMOLOGY OF COMPLEX
%%%%%%%%%%%%%%%%%%%%%%%%%%%%
 
As shown in \cite{HoHu-93}, elements that annihilate the Koszul homology are also in the annihilator of the homology modules of a wider class of complexes. In this section we look at a class of complexes described, for instance,  in  Definition (3.16) of \cite{HoHu-93}
and we study their relations with  $\kas $ sequences.

 Let $(G_{\bullet}, \partial _{\bullet})$ be
  a complex of finitely generated free modules:
\[
\xymatrix{ 0 \ar[r] &G_n \ar[r]^{\partial_n} &G_{n-1} \ar[r] &\dots
  \ar[r]& G_1 \ar[r]^{\partial_1} & G_0 \ar[r] &0}
\]
where $n$ is the length of the complex.
 For a positive integer $r$, denote by
$I_r(\partial_i)$ the ideal generated by the $r\times r$ minors of a
matrix that represents the map $\partial_i$ after a choice of basis
 (the ideal is independent of the choice). 
 The rank of the homomorphism $\partial_i$ is given by
\[
\rnk[]{\partial_i}=\max\{r \mid  I_r(\partial_i) \neq 0\}.
\]

Denote by $I(\partial_i)$ the ideal
$I_{\rnk[]{\partial_i}}(\partial_i)$.

The complex $(G_{\bullet},\partial_{\bullet})$ satisfies the {\it standard
conditions on rank} if the following equality holds for $i=1, \dots, n$:
\[
\rnk [R]{G_i}=\rnk[]{\partial_i }+ \rnk[]{\partial_{i+1}}.
\]

\begin{ipg}\label{eisbuc}
The standard conditions on rank play a very important role on the exactness 
of a complex.  Buchsbaum and Eisenbud show in  \cite{BucEis-73}  that
a complex of free modules
\[
\xymatrix{ 0 \ar[r] &G_n \ar[r]^{\partial_n} &G_{n-1} \ar[r] &\dots
  \ar[r]& G_1 \ar[r]^{\partial_1} & G_0 \ar[r] &0}
\]
is exact if and only if it satisfies the standard conditions on rank and $\dpt I(\partial_i) \ge i$ for all integers $i \ge 1$.
\end{ipg}
The proof of  \ref{eisbuc} uses several lemmas. We report two of them for easy reference. One could consult  Proposition 1.4.10 of \cite{bruns-herzog} or refer to the original paper \cite{BucEis-73} for a proof.

\begin{ipg}\label{baseOfStandard}
Let $\partial: F_1\to  F_0$ be a homomorphism between two free $R$-modules. 
Denote by $M$ the cokernel of $\partial$. Then $I(\partial)=R$ if and only if $M$ is a free module of rank equal to  $\rnk[] F_0 -\rnk[](\partial)$.
\end{ipg} 

\begin{ipg}\label{TT}

Let $0 \to G_n \to \dots \to G_0$ be a complex of free 
modules that satisfies the standard conditions on rank.
 Assume that $I(\partial_i)=R$ for all
  $i=1, \dots, n$ then the complex is split exact.  
 
 \end{ipg}

Our first proposition will show how  a $\kas $ sequence annihilates the homology of complexes which are the tensor product of high syzygies and  certain complexes satisfying the standard conditions on rank. We will use the following result, which is  Proposition 3.1 of \cite{HoHu-93}. 
 The statement in the original publication has a typo, so we restate it here.
\begin{ipg}\label{foundation-homology}
Let $R$ be an arbitrary commutative ring and let 
$M_\bullet$ be a complex 
$$0 \to M_n \to \cdots \to M_1 \to M_0 \to 0$$ 
of arbitrary $R$-modules.  Let $\xx = x_1,\ldots, x_n $ be a sequence of elements in $R$, and 
let $d, d_0, d_1,\ldots, d_{n-2}$ be elements of $R$ such that 
\begin{enumerate}
\item $d_i$ kills $H_{n-i}(M_\bullet)$, for $0 \le i \le n-2$ 
(this is where the correction is), and
\item $d$ kills $H_{n-j}(x_1,\ldots, x_n; M_{j+1})$, for $1 \le j \le n-1$.
\end{enumerate}
Then $D = (d_0d_1\ldots d_{n-2})d^n$ kills $\Hom[R] {R/(x_1,\ldots, x_n)}{H_1(M_\bullet)}$.
\end{ipg}

\begin{prp}\label{KAScomplex}
Let $(R,\m)$ be a local ring of dimension $d$ and let
 $c_1,\ldots, c_d$ be a KAS for the family of modules which are $d$th syzygies.
  Then there exists an integer $t \ge 0$ such that
 $$c_{n+j}^tH_i(G_\bullet \otimes M)=0,$$
 for all integers $i$ and $j$ such that  $i \ge  1$, $0\le j \le d-n$,  for every $d$th syzygy $M$, and 
for every complex $(G_\bullet,\partial_\bullet)$ of length $n\le d$ 
of finitely generated free $R$-modules with the following properties:
\begin{enumerate}
\item  $G_\bullet$ satisfies the standard  conditions on rank,
\item  For $1 \le i \le n$,  the ideal $I(\partial_i) + (c_{i+1},\ldots, c_d)$ is $\m$-primary.
\end{enumerate}
\end{prp}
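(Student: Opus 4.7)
The plan is to adapt the strategy of Hochster--Huneke in \cite{HoHu-93}, using the Koszul-homology annihilator statement \ref{foundation-homology} as the main engine and the KAS machinery developed in Section 2 to supply its hypotheses.

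The first step is to use the $\m$-primary condition on $I(\partial_i)+(c_{i+1},\ldots,c_d)$, together with prime avoidance, to select elements $y_i \in I(\partial_i)$ so that the sequence $y_1,\ldots,y_n,c_{n+1},\ldots,c_d$ is a system of parameters for $R$ which is well-suited to $\cc$ in the sense of the definition following \ref{KASexistence}. Well-suitedness is the critical feature: by Corollary \ref{KASUSED} together with \ref{KASdef}, it guarantees that for each $i \le n$ and each $v \ge n$, the element $c_v$ annihilates $H_t(y_i,\ldots,y_n;M)$ for every $t \ge 1$ and every $d$th syzygy $M$ (since $y_i,\ldots,y_n,c_{n-i+2},\ldots,c_d$ may be arranged to be a system of parameters).

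I would then prove the statement by descending induction on $i \in \{1,\ldots,n\}$: some power $c_v^{t_i}$, with $t_i$ depending only on $d$ and $n$, annihilates $H_i(G_\bullet\otimes M)$ for every $v \in \{n,\ldots,d\}$. The base case $i>n$ is trivial. For the inductive step, form the shifted complex $M^{(i)}_\bullet$ with $M^{(i)}_k = G_{k+i-1}\otimes M$, of length $N=n-i+1$, so that $H_1(M^{(i)}_\bullet)=H_i(G_\bullet\otimes M)$ and $H_k(M^{(i)}_\bullet)=H_{k+i-1}(G_\bullet\otimes M)$ for $k\ge 2$. Apply \ref{foundation-homology} to $M^{(i)}_\bullet$ with the sequence $y_i,\ldots,y_n$: the elements $d_j$ are furnished by the inductive hypothesis (powers of KAS elements killing the higher homologies $H_{k+i-1}(G_\bullet\otimes M)$ for $k\ge 2$), and the element $d$ is furnished by Corollary \ref{KASUSED} as a power of some $c_v$ (free modules factor out of Koszul homology, so $d$ also annihilates $H_*(y_i,\ldots,y_n;G_{j+1}\otimes M)$). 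The conclusion is that a product $D$ of powers of KAS elements annihilates $\Hom{R/(y_i,\ldots,y_n)}{H_i(G_\bullet\otimes M)}$.

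The principal obstacle, and the subtlest step, is promoting this annihilation of the Hom (equivalently, of the submodule of $H_i(G_\bullet\otimes M)$ killed by $(y_i,\ldots,y_n)$) to annihilation of all of $H_i(G_\bullet\otimes M)$. To overcome this, I would enlarge the sequence by appending $c_{n+1}^s,\ldots,c_d^s$, so that the resulting ideal is $\m$-primary; well-suitedness and \ref{KASdef} keep all Koszul and system-of-parameters hypotheses of \ref{foundation-homology} intact. Combining this enlargement with the support control furnished by \ref{baseOfStandard} and \ref{TT} (which confines $\Supp{H_i(G_\bullet\otimes M)}$ inside $\bigcup_{j\ge 1}V(I(\partial_j))$), and with a Nakayama-type argument along a filtration by powers of the KAS-generated sequence, the Hom-level annihilation upgrades to annihilation of $H_i(G_\bullet\otimes M)$ by a uniform power $c_v^t$ for every $v\in\{n,\ldots,d\}$. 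The uniformity of $t$ is automatic, since every exponent produced in this scheme---from \ref{foundation-homology}, from the KAS inequalities via the function $E_1$, and from the finitely many inductive steps---depends only on $d$ and $n$.
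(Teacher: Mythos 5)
Your overall architecture --- descending induction on homological degree, with \ref{foundation-homology} as the engine and the KAS supplying its hypotheses --- is the same as the paper's, but the step you yourself flag as the ``principal obstacle'' is a genuine gap, and the sequence you choose makes it unrepairable as stated. Taking a single $y_j\in I(\partial_j)$ for each $j$ and feeding $y_i,\ldots,y_n$ into \ref{foundation-homology} yields annihilation only of $\Hom{R/(y_i,\ldots,y_n)}{H_i(G_\bullet\otimes M)}$, and your proposed promotion to all of $H_i(G_\bullet\otimes M)$ does not go through. Appending $c_{n+1}^s,\ldots,c_d^s$ to reach an $\m$-primary ideal $J$ does not help: $H_i(G_\bullet\otimes M)$ need not have finite length (its support is only confined to $\bigcup_{j\ge i}V(I(\partial_j))$, and $I(\partial_n)$ may have height $n<d$), so no power of $J$ annihilates it. And even granting $J^sH=0$, a filtration by the submodules $J^kH$ would require $D$ to kill $(0:_{H/J^{k+1}H}J)$ for every $k$, which is not what \ref{foundation-homology} provides --- it only controls the $J$-torsion of $H_1$ of the original complex.

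The missing idea is the paper's choice of parameters: at stage $m$ one picks \emph{all $m$ elements} $y_1,\ldots,y_m$ inside the single ideal $I(\partial_m)$, completed to a system of parameters by $c_{m+1},\ldots,c_d$. Then for each $\ell$ the localization at $y_\ell$ inverts $I(\partial_m)$, the truncation $0\to (G_n)_{y_\ell}\to\cdots\to(G_{m-1})_{y_\ell}$ becomes split exact by \ref{baseOfStandard} and \ref{TT}, and split exactness survives $-\otimes M$; hence $H_m(G_\bullet\otimes M)_{y_\ell}=0$ for every $\ell$, so a power $(y_1^s,\ldots,y_m^s)$ annihilates the \emph{entire} module $H_m(G_\bullet\otimes M)$. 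Only because of this is $\Hom{R/(y_1^s,\ldots,y_m^s)}{H_m(G_\bullet\otimes M)}$ equal to $H_m(G_\bullet\otimes M)$ itself, so the conclusion of \ref{foundation-homology} is already the desired annihilation and no promotion step is needed. (The same device handles the top degree $i=n$: every cycle in $H_n(G_\bullet\otimes M)$ is killed by powers of $y_1,\ldots,y_n\in I(\partial_n)$, hence lies in $H_n(y_1^s,\ldots,y_n^s;G_n\otimes M)$, which the KAS kills by Corollary~\ref{KASUSED}.) With one $y_j$ per ideal $I(\partial_j)$, localizing at $y_j$ inverts only $I(\partial_j)$ and gives no vanishing of $H_i(G_\bullet\otimes M)_{y_j}$, so your sequence cannot be arranged to annihilate the homology and the Hom-to-module passage cannot be closed.
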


\begin{proof}
Denote by $b_i$ the rank of the free module $G_i$.
We use reverse induction on the homological
 degree of the complex $G_{\bullet}$.

At homological degree $n$,  the complex looks like $0 \to G_n \to
  G_{n-1}$, with $\partial_n: G_n \to G_{n-1}$.  
  By hypothesis $I(\partial_n) +(c_{n+1},\ldots, c_d)$ is $\m$-primary.   
   Choose elements $y_1,\ldots, y_n \in I(\partial_n)$
  such that the sequence  $y_1,\ldots, y_n, c_{n+1},\ldots, c_d$ forms a system of parameters.  
  Then for each $1 \le i \le n$, we obtain ${I(\partial_n)}_{y_i}=R_{y_i}$. 
  The content of \ref{baseOfStandard} implies that the cokernel of $(\partial_n)_{y_i}$ is a free module of rank  equal to
  $b_{n-1}-\rnk[](\partial_n)_{y_i}=b_{n-1}-b_n$, where the last equality holds since the complex $G_{\bullet}$ satisfies the standard conditions on rank. In particular, the sequence 
$$0 \to (G_n)_{y_i} \to (G_{n-1})_{y_i} \to  \Coker (\partial_n)_{y_i} \to 0$$
 is a split exact sequence.
  
  Let $M$ be a $d$th syzygy and let $\mathbf{z}\in \ho_n(G_{\bullet}\otimes M)\subseteq  G_n\otimes M$.   
   Then $(G_n\otimes M)_{y_i} \to (G_{n-1}\otimes M)_{y_i}$ is injective for each $i$ such that $0\le i\le n$, so there is an integer $s$ such that   $y_i^{s}\mathbf{z}=0$
    for each $i$ such that $1\le i \le n$,  
    and therefore $\mathbf{z}\in \ho_n(y_1^{s}, \ldots, y_n^s; G_n\otimes M)$. 
By Corollary~\ref{KASUSED}, each of $c_{n}, \ldots, c_d$ kills this homology, and so each kills $\mathbf{z}$.

Suppose that the conclusion holds for all complexes of length $n$,
 at each homological degree greater than $m$ (with $1 \le m < n$).
  In particular,  there exists an integer  $k$ such that  $c_n^k,\ldots, c_d^k$ 
 annihilate $\ho_i(G_{\bullet} \otimes M)$ for all $i >m$. 
 We need to show that there is a power of the elements $c_n, \dots, c_d$ that annihilates $\ho_m(G_{\bullet}\otimes M)$.

Pick $y_1,\ldots, y_m \in I(\partial_m)$ such that $y_1,\ldots, y_m, c_{m+1},\ldots, c_d$ are a system of parameters.  For each $1 \le i \le m$, the localized subcomplex $0 \to (G_n)_{y_i} \to \cdots \to (G_m)_{y_i}\to (G_{m-1})_{y_i}$ is split exact since the complex satisfies the hypothesis of \ref{TT}.  Hence, given a $d$th syzygy $M$, there is a power $s$ such that $(y_1^s,\ldots, y_m^s)$ kills  the homology $H_m(G_\bullet \otimes M)$. This implies that there is a homomorphism that sends the identity in $R/(y_1^s,\ldots, y_m^s)$ to any element
 $\mathbf{z} \in H_m(G_\bullet \otimes M)$.  
 As the elements $c_i$'s are part of a KAS, we 
 may now apply the content of \ref{foundation-homology} to the complex 
$(0 \to G_n \to \cdots \to G_{m-1} \to 0) \otimes M$ 
with each $d_0 = \cdots = d_{n-m-1} = c_j^k$ 
and $d = c_j$ for $j\ge n$.  Thus $c_j^{(n-m)k + n-m+1}$ 
kills the desired homology. 
\end{proof}

%%%%%%%%%%%%%%
%%%OUR COMPLEXES
%%%%%%%%%%%%%
We now look at the particular complex we will use.

\begin{rmk}\label{EaNo}
Let $ \xx=x_1, \dots, x_h$ be a sequence elements of a ring $R$. 
%such that $\hgt[R](x_1, \dots, x_i) \ge i$ for all $i=1, \dots, h$.
Consider the $n \times (n+h-1)$ matrix
\[ 
\mathcal{B}=\left( \begin{array}{cccccccc}
x_1  & x_2  & \dots  &  x_h    & 0            & \dots &\dots & 0 \\
0      &  x_1 & x_2     & \dots   & x_{h}&  0& \dots &0 \\
\dots &        &             &              &               &         &          &   \\
0      &   0     &  \dots   &  \dots  &               &         &           & x_h\\
 \end{array} \right).
 \] 
The $n \times n$ minors of $\mathcal B$ are the generators 
of the ideal $J^n$, where $J=(x_1, \dots, x_h)$. 
In \cite{EaNo62} the authors construct a complex 
$(B^{J, n}_{\bullet}, \partial_{\bullet})$ 
such that $\ho_0(B^{J, n}_{\bullet})=R/J^n$.
Theorem 2 of \cite{EaNo62} says that  if  $\xx$
 is a regular sequence, then the complex
  $(B^{J, n}_{\bullet}, \partial_{\bullet})$ 
  is a free resolution of $R/J^n$ of length $h$.
   Moreover, $J \subseteq \sqrt{I(\partial_i)}$ for all $i=1, \dots, h$.
% 
% Let $J$ be a monomial ideal in $x_1, \dots, x_h$ and let  $\mathbf{x}$ be a monomial in $x_1, \dots x_h$ not in $J$.  In Corollary 3.4 of \cite{--} the authors prove that  there exists a complex $G_{\bullet}$ of length $h$ that satisfies the star dad conditions on rank and strong conditions on height such that 
%   
\end{rmk}
Using the particular complex above, we are finally able to list in the following proposition all the results that are used in the next section.

\begin{prp}\label{fromAGT}
 Let $\Rmk$ be a local ring of  dimension $d$, let $\mathcal{M}$ be the family of all finitely generated $R$-modules that are $d$th syzygies,and
 let $\cc = c_1, \dots, c_d$ be a $\kas$ sequence for $\mathcal{M}$.  
 Let $x_1, \dots, x_d$ be 
 a system of parameters well-suited to $\cc$ and denote by $I_j$ the ideal $(x_1, \dots x_j)$.
 Let $t$ be the integer given by Proposition~\ref{KAScomplex} for $\cc$. 
  For every $i$ and $j$ such that $1 \le j \le i \le n$ and   for any positive integers $t_{j+1}, \ldots, t_i$ the following hold:
 \begin{enumerate}
 
\item  the complex 
$(B^{I_j, n}_{\bullet}, \partial_{\bullet}) \otimes K_\bullet(c_{j+1}^{t_{j+1}}, \ldots, c_i^{t_i}; R)$ 
 (where if $j =i$ there is no Koszul complex)
 satisfies  conditions (1) and (2)  of Proposition~\ref{KAScomplex};

\item $c_{k}^t\Tor{1}{R/(I_{j}^n+(c_{j+1}^{t_{j+1}}, \ldots, c_i^{t_i}))}{M}=0$, for all $M \in \mathcal{M}$ and for all $k=i, \dots, d$;

\item $c_{k}^t\left((I_{j}^n+(c_{j+1}^{t_{j+1}}, \ldots, c_i^{t_i}))G \cap N\right) \subseteq \left(I_{j}^n+(c_{j+1}^{t_{j+1}}, \ldots, c_i^{t_i})\right)N$, for all modules $N$ and free modules $G$ such that $N \subseteq G$ and $G/N\in \mathcal{M}$,  and for all $k=i, \dots, d$;
\item  
$c^t_k\left((I_j^n+(c_{j+1}^{t_{j+1}}, \ldots, c_{i-1}^{t_{i-1}}))M: c_i^\infty\right) \subseteq\left(I_j^n+ (c_{j+1}^{t_{j+1}}, \ldots, c_{i-1}^{t_{i-1}})\right)M,$
 for all $k=i, \dots, d$.  
  \end{enumerate}
\end{prp}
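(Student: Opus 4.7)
The plan is to prove the four assertions in order; (1) is the technical core, while (2)--(4) follow from (1) by invoking Proposition~\ref{KAScomplex} together with standard homological comparisons. For (1), I would verify the two hypotheses of Proposition~\ref{KAScomplex} for the complex $C = B^{I_j,n}_\bullet \otimes K_\bullet(c_{j+1}^{t_{j+1}},\ldots,c_i^{t_i};R)$. The standard conditions on rank hold for $B^{I_j,n}_\bullet$ because the Eagon--Northcott-type complex of \cite{EaNo62} is a resolution whenever the sequence is regular, so its matrix ranks (determined by the combinatorics of the matrix $\mathcal B$ of Remark~\ref{EaNo} rather than by any specific sequence) satisfy the required numerical identities; they hold for every Koszul complex, and they are inherited by the tensor product of two complexes of finitely generated free modules. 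For the $\m$-primary condition, at each homological degree $k$ with $1\le k\le i$ the ideal $I(\partial_k)$ of the total differential contains, up to radical, both $I_j$ (from the $B$-factor, by Remark~\ref{EaNo}) and the elements $c_{j+1},\ldots,c_i$ (from the Koszul factor); hence $I(\partial_k)+(c_{k+1},\ldots,c_d)$ has radical containing $(x_1,\ldots,x_j,c_{j+1},\ldots,c_d)$, which is $\m$-primary because $\xx$ is well-suited to $\cc$.

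For (2), Proposition~\ref{KAScomplex} applied to $C$ (of length $i$) yields $c_k^t H_\ell(C\otimes M)=0$ for all $k=i,\ldots,d$, all $\ell\ge 1$, and every $M\in\mathcal M$. Writing $J=I_j^n+(c_{j+1}^{t_{j+1}},\ldots,c_i^{t_i})$, there is a natural surjection $H_1(C\otimes M)\onto \Tor{1}{R/J}{M}$, which one can produce either by chasing through the two short exact sequences $0\to Z_1\to C_1\to d_1(C_1)\to 0$ and $0\to d_1(C_1)\to C_0\to R/J\to 0$, or by reading off the edge term of the hyperhomology spectral sequence $E^2_{p,q}=\Tor{p}{H_q(C)}{M}\Rightarrow H_{p+q}(C\otimes M)$, for which $E^\infty_{1,0}=E^2_{1,0}=\Tor{1}{R/J}{M}$ because the relevant differentials have sources or targets outside the first quadrant. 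This surjection transfers the annihilation to $\Tor{1}{R/J}{M}$, proving (2). Statement (3) is then the translation of (2) through the standard isomorphism $\Tor{1}{R/J}{M}\cong(JG\cap N)/JN$ obtained by tensoring $0\to N\to G\to M\to 0$ with $R/J$ and using the freeness of $G$.

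For (4), set $C' = B^{I_j,n}_\bullet\otimes K_\bullet(c_{j+1}^{t_{j+1}},\ldots,c_{i-1}^{t_{i-1}};R)$ and $I=I_j^n+(c_{j+1}^{t_{j+1}},\ldots,c_{i-1}^{t_{i-1}})$, so that $H_0(C')=R/I$ and $H_0(C'\otimes M)=M/IM$. For each $s\ge 1$ the complex $C'\otimes K_\bullet(c_i^s;R)$ is an instance of the complex in (1) (with $t_i=s$) and is canonically the mapping cone of multiplication by $c_i^s$ on $C'$. Tensoring the cone sequence with $M$ produces the long exact sequence
\[
H_1(C'\otimes M)\xra{c_i^s}H_1(C'\otimes M)\to H_1\bigl((C'\otimes K_\bullet(c_i^s;R))\otimes M\bigr)\to M/IM\xra{c_i^s}M/IM,
\]
whose third arrow surjects onto $\ker(c_i^s\colon M/IM\to M/IM)=(IM:c_i^s)/IM$. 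Its source is annihilated by $c_k^t$ for $k\ge i$ by Proposition~\ref{KAScomplex} applied to the length-$i$ complex $C'\otimes K_\bullet(c_i^s;R)$, so $c_k^t(IM:c_i^s)\subseteq IM$ for every $s$, and taking the union over $s$ yields the desired $c_k^t(IM:c_i^\infty)\subseteq IM$. The main obstacle is the verification of (1), in particular showing that the Eagon--Northcott complex of the matrix $\mathcal B$ retains the standard rank identities after specialization from the generic regular case to the local ring $R$, where $x_1,\ldots,x_j$ is only part of a system of parameters rather than a regular sequence.
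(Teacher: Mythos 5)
Parts (2)--(4) of your proposal are correct. For (2) your spectral-sequence edge map $H_1(C\otimes M)\onto \Tor{1}{H_0(C)}{M}$ does the same job as the paper's comparison chain map from $C\otimes M$ to (a minimal free resolution of $R/J$) $\otimes\, M$, which is the identity in degrees $0$ and $1$; the two arguments are interchangeable. For (4) you take a genuinely different route: the paper works over the generic ring $S=\mathbb{Z}[X_1,\ldots,X_d,C_1,\ldots,C_d]$ and reads off the last row of the second differential $\Phi_2$, using that $((\underline{X_j})^n,C_{j+1}^{t_{j+1}},\ldots,C_{i-1}^{t_{i-1}}):C_i^{t_i}$ equals $((\underline{X_j})^n,C_{j+1}^{t_{j+1}},\ldots,C_{i-1}^{t_{i-1}})$ for a regular sequence, whereas you identify $C'\otimes K_\bullet(c_i^s;R)$ with the mapping cone of $c_i^s$ on $C'$ and extract $(IM:c_i^s)/IM$ as a quotient of $H_1$ of the cone tensored with $M$. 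Your version is cleaner and avoids matrix bookkeeping; it is correct, since the cone is exactly the complex of part (1) with $t_i=s$, so Proposition~\ref{KAScomplex} applies to it.

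The gap is in (1), and you have located it yourself but not closed it. The assertion that the standard conditions on rank ``are inherited by the tensor product of two complexes of finitely generated free modules'' is not a general fact, and over $R$ the complex $B^{I_j,n}_\bullet$ is not a resolution (the $x$'s need not form a regular sequence), so you cannot invoke \ref{eisbuc} directly; moreover your computation of $I(\partial_k)$ up to radical presupposes that you already know $\rnk[]{\partial_k}$, which is exactly what is in question. The paper's fix is to do everything generically first: over $S$ the complex $B^{\underline{X_j},n}\otimes_S K_\bullet(C_{j+1}^{t_{j+1}},\ldots,C_i^{t_i};S)$ \emph{is} a free resolution (the $C$'s are a regular sequence on $S/(\underline{X_j})^n$), so Buchsbaum--Eisenbud gives the rank identities and shows each ideal of rank-size minors has radical $(X_1,\ldots,X_j,C_{j+1},\ldots,C_i)$. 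Specializing along $\psi\colon S\to R$ then preserves each $\rnk[]{\partial_k}$: minors of size larger than the generic rank are images of elements that are already zero in $S$, while the ideal of rank-size minors maps to an ideal whose radical contains $(x_1,\ldots,x_j,c_{j+1},\ldots,c_i)$, which is nonzero (indeed part of a system of parameters, by well-suitedness); this simultaneously yields condition (2) of Proposition~\ref{KAScomplex}. You should add this specialization argument; without it the application of Proposition~\ref{KAScomplex}, on which (2)--(4) all rest, is not justified.
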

\begin{proof} 
For (1), let $S=\mathbb{Z}[X_1, \dots, X_d, C_1, \ldots, C_d]$
 be the polynomial ring in $2d$
 variables and define the ring homomorphism $\psi : S \to R$ 
 such that $\psi(X_j)=x_j$ and $\psi(C_j) = c_j$.
Denote by $\underline{X_j}$ the sequence $X_1, \dots, X_j$. 
By Remark \ref{EaNo}, the complex $B^{\underline{X_j}, n}$ 
is a free resolution over $S$ of the ideal $(X_1, \dots, X_j)^n$.  
Since the elements $C_1,\ldots, C_d$ are  a regular sequence over
 the $S$-module $H_0(B^{\underline{X_j}, n}) = S/ (X_1,\ldots, X_j)^n$, the complex
  $B^{\underline{X_j}, n} \otimes_S K_\bullet(C_{j+1}^{t_{j+1}}, \ldots, C_i^{t_i}; S)$ is a  free resolution of 
  $S/((X_1, \ldots, {X_j})^n, C_{j+1}^{t_{j+1}}, \ldots, C_i^{t_i})$ and therefore  it satisfies the standard 
  conditions on rank by \ref{eisbuc}.
 By induction on the length of the Koszul complex and using \ref{EaNo}, one can see 
 that each ideal of rank-size minors is, up to  radical, the ideal
  $(X_1,\ldots, X_j, C_{j+1}, \ldots, C_i)$.
 Base change to $R$ clearly preserves the rank in this 
 case, as well as condition (2) of Proposition~\ref{KAScomplex}.
%As $B^{\underline{X}_i,n}=B^{\underline{x}_i,n} \otimes R$
%, we obtain that  $B^{\underline{x}_i,n}$ satisfies the standard condition on rank and height.

For (2), denote by $F_{\bullet}$ a minimal  free resolution of  
$R/(I_{j}^n+(c_{j+1}^{t_{j+1}}, \ldots, c_i^{t_i}))$. 
The complexes $F_{\bullet}$ and $B^{I_{j}, n} \otimes K_\bullet(c_{j+1}^{t_{j+1}},\ldots, c_i^{t_i })$ are identical up to homological degree $1$ 
and up to a change of base. Therefore, as $F_{\bullet}$
 is exact, there is a chain map from 
$ B^{I_{j}, n} \otimes K_\bullet(c_{j+1}^{t_{j+1}}, \ldots, c_i^{t_i})$ to $F_{\bullet} $ that  lifts 
the identity between the free modules of homological degree $1$. This chain map induces the following surjection
\begin{equation*}
\ho_1(B^{I_{j}, n} \otimes K_\bullet(c_{j+1}^{t_{j+1}}, \ldots, c_i^{t_i} ;R)\otimes_R M) \rightarrow 
\Tor{1}{R/(I_{j}^n+(c_{j+1}^{t_{j+1}}, \ldots, c_i^{t_i}))}{M}.
\end{equation*}
So it is enough to show that 
\begin{eqnarray}\label{xx}
c_k^t \ho_1(B^{I_{j}, n} \otimes K_\bullet(c_{j+1}^{t_{j+1}}, 
\ldots, c_i^{t_i};R)\otimes_R M)=0.
\end{eqnarray}
 This follows from part (1) and Proposition~\ref{KAScomplex}.

Statement (3) follows from (2) by isomorphism \ref{torUAR}, for $i=1$.

For (4), denote the first and the second maps of the complex
$B^{\underline{X_j},n} \otimes K_\bullet(C_{j+1}^{t_{j+1}}, \ldots, C_i^{t_i}; S)$
  by $\Phi_1$ and $\Phi_2$. 
   Such complex is a free resolution of $S/((X_1, \ldots, {X_j})^n, C_{j+1}^{t_{j+1}}, \ldots, C_i^{t_i})$ 
 and therefore the entries of the last row of a matrix representing $\Phi_2$
generate the ideal $((X_1, \ldots, {X_j})^n, C_{j+1}^{t_{j+1}}, \ldots,
  C_{i-1}^{t_{i-1}}: C_i^{t_i})$ which is equal to $((X_1, \ldots, {X_j})^n,  C_{j+1}^{t_{j+1}}, 
  \ldots, C_{i-1}^{t_{i-1}} )$ for each value of $t_i$ since the elements $X_1, \dots, X_j, C_{j+1}, 
  \dots, C_i$ form a regular sequence.  

Let $\mathbf{z} \in \left((I_j^n+(c_{j+1}^{t_{j+1}}, \ldots, c_{i-1}^{t_{i-1}}))M: c_i^\infty\right)$, 
there exists a tuple $\underline{\mathbf{z}} \in \Ker ( \Phi_1 \otimes_S \id_M)$, for which  
$\mathbf{z}$ is the last entry.
Notice that equation \ref{xx} reads
\begin{eqnarray*}
c_k^t \Ker (\Phi_1 \otimes_R \id_M) \subseteq \Im( \Phi_2 \otimes_R\id_M ).
\end{eqnarray*}

 Therefore,  
$ c^t_k \underline{\mathbf{z}} \in \im (\Phi_2 \otimes_S \id_M) =\im(\Phi_2) \otimes_S \id_ M$. 
By reading this inclusion componentwise, we obtain that 
$$c_k^t \mathbf{z} \in ((X_1, \ldots, {X_j})^n,  C_{j+1}^{t_{j+1}}, 
\ldots, C_{i-1}^{t_{i-1}} )\otimes_S M=(I_j^n+ (c_{j+1}^{t_{j+1}}, \ldots, c_{i-1}^{t_{i-1}}))M,$$
which finishes the proof of the proposition.
\end{proof}

%%%%%%%%%%%%%%%%%%%%%
%%%%%%%%%%
%%%%%%%%%%
\section{Proof of the Main Theorem}
%%%%%%%%%%%%%%%%%%%%%%
%%%%%%%%%%%%%%%%%%%
%%%%%%%%%%%%%%%%%%%%%%%%
%%%%%BRIANCON SKODA
%%%%%%%%%%%%%%%%%%%%%%%%%
%%%%%%%%%%%%%%%%%%%%%%%%%%
In this section, $\Rm$ will be a local ring of dimension $d$. 
If $(S,\n)$ is a faithfully flat local extension of $R$ then the conclusion of the Main Theorem descends from $S$ to $R$.  Hence we may always assume that $R$ has an infinite residue field.
The idea behind the proof of the Main Theorem is to first pick a KAS, then reduce 
the statement to families of $\m$-primary ideals generated by minimal reductions (which are then systems of parameters) and then, by picking a generic generating set for the reduction, change the calculation from the reduction  to the KAS.

Recall that given an ideal $I \subseteq R$, a {\it reduction} of $I$ is an ideal $J \subseteq I$ such that
there exists an integer $k$ satisfying the equality $I^{k+1}=JI^k$, which in turn implies that 
$I^n \subseteq J^{n-k}$, for all $n\ge k$. If $J \subseteq I$ is a reduction of $I$ then 
$\overline{I} = \overline{J}$. When $(R,\m)$ has an infinite residue field, minimal reductions of an ideal always exist.  For a comprehensive 
treatment one may consult \cite{HuSw-06}.

We will rely heavily on Huneke's Uniform Brian\c con-Skoda Theorem
 (Theorem 4.13 in \cite{Hune-92}).  
 We present it in a less general form which is adequate for our needs.
\begin{ipg} \label{UBS}
  Let $\Rmk$ be a complete reduced Noetherian ring.  
  There exists a positive integer $k$ such
   that for all ideals $I \subseteq R$, the inclusion
  $\overline{I^n} \subseteq I^{n-k}$ holds for all $n\ge k$.
\end{ipg}

%We need to use a version of this theorem in the case of non-reduced rings.  
%The statement below can be easily generalized to any of the 
%conditions where Theorem 4.13 of \cite{Hune-92} holds, 
%but we state only the case we need.

Our next theorem shows that the integer $k$ in the definition of a reduction of an ideal
can be uniformly chosen to work for all ideals and for all reductions.

\begin{thm}\label{extendedUBS}
 Let $\Rmk$ be a complete local ring.  
 There exists a positive integer $k$ such that for 
 all ideals $I$ and all reductions $J \subseteq I$, the inclusion
  $I^n \subseteq J^{n-k}$ holds for all $n \ge k$.
\end{thm}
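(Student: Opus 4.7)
The plan is to handle the reduced case directly from \ref{UBS}, then lift to the general case by induction along the nilpotent filtration, using Huneke's uniform Artin-Rees theorem as the bridge. When $R$ is reduced, for any reduction $J \subseteq I$ one has $I \subseteq \overline{J}$, hence $I^n \subseteq \overline{J}^n \subseteq \overline{J^n}$; applying \ref{UBS} to the ideal $J$ produces an integer $k_0$ depending only on $R$ with $\overline{J^n} \subseteq J^{n-k_0}$ for all $n \ge k_0$, which is the theorem in this case.

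For the general case, let $\mathfrak{n}$ denote the nilradical of $R$ and let $N$ satisfy $\mathfrak{n}^N = 0$. Applying the reduced case to $R/\mathfrak{n}$ yields $k_0$ such that
\[ I^n \subseteq J^{n-k_0} + \mathfrak{n} \]
for all ideals $I$, all reductions $J \subseteq I$, and all $n \ge k_0$. Next I would invoke Huneke's uniform Artin-Rees theorem from \cite{Hune-92}, applied to each pair $\mathfrak{n}^j \subseteq R$ for $j = 1, \ldots, N-1$; taking the maximum over $j$ produces a single integer $h$ such that $K^n \cap \mathfrak{n}^j \subseteq K^{n-h}\mathfrak{n}^j$ for every ideal $K$, every such $j$, and every $n \ge h$.

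The central inductive claim is: for each $j \in \{1, \ldots, N\}$ there exists $k_j$ with $I^n \subseteq J^{n-k_j} + \mathfrak{n}^j$ for all $n \ge k_j$, uniformly in $I$ and in reductions $J \subseteq I$; the case $j = N$ is the theorem. In the inductive step, given $x \in I^n$, write $x = y + z$ with $y \in J^{n-k_j}$ and $z \in \mathfrak{n}^j$. Since $J \subseteq I$, both $y$ and $x$ lie in $I^{n-k_j}$, so $z \in I^{n-k_j} \cap \mathfrak{n}^j \subseteq I^{n-k_j-h}\mathfrak{n}^j$ by uniform Artin-Rees. Expanding each factor from $I^{n-k_j-h}$ via the inductive hypothesis as a sum of an element of $J^{n-2k_j-h}$ and an element of $\mathfrak{n}^j$, multiplying through by $\mathfrak{n}^j$, and using $\mathfrak{n}^{2j} \subseteq \mathfrak{n}^{j+1}$, yields $z \in J^{n-2k_j-h} + \mathfrak{n}^{j+1}$. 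Thus $k_{j+1} := 2k_j + h$ works (with $n \ge k_{j+1}$ ensuring all the exponent bookkeeping), and $k := k_N$ is the desired uniform bound.

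The main obstacle is threading the multiple layers of uniformity — over ideals $I$, over reductions $J \subseteq I$, and over levels of the nilpotent filtration — into a single constant depending only on $R$. The crucial feature is that uniform Artin-Rees allows one to trade the additive error term $\mathfrak{n}^j$ for the deeper nilpotent $\mathfrak{n}^{j+1}$ at the price of only a fixed additive constant $h$ in the $J$-exponent; after $N$ iterations the nilpotent term disappears altogether, and the total loss $k_N$ remains finite and independent of $I$ and $J$.
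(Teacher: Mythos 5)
Your argument is correct, and it rests on the same two pillars as the paper's proof --- the reduced case via Huneke's uniform Brian\c con-Skoda theorem \ref{UBS}, and a bootstrap through the nilpotents powered by Huneke's uniform Artin-Rees theorem --- but it organizes the bootstrap by a different induction. The paper runs a Noetherian induction that lets it pass immediately to the case of a single nonzero nilpotent $x$ with $x^2=0$: with $k_1$ the bound for $R/xR$ supplied by the induction and $k_2$ the uniform Artin-Rees constant for the pair $xR \subseteq R$, it finishes in the single chain $I^n \subseteq J^{n-k_1} + (xR \cap I^{n-k_1}) \subseteq J^{n-k_1} + xI^{n-k_1-k_2} \subseteq J^{n-k_1} + x(J^{n-2k_1-k_2}+xR) \subseteq J^{n-2k_1-k_2}$, the last step using $x^2=0$. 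You instead induct explicitly on the powers $\mathfrak{n}^j$ of the nilradical, pushing the error term one level deeper at each step and terminating once $\mathfrak{n}^N=0$. The paper's version is more compact and invokes uniform Artin-Rees for a single pair $xR\subseteq R$ only; your version makes the bookkeeping fully explicit (the recursion $k_{j+1}=2k_j+h$, hence $k=k_N$, is visible) and avoids the care required to set up the Noetherian induction, in which one must keep track that the statement being passed to quotients is itself uniform over all ideals $I$ and all reductions $J\subseteq I$. Both routes are sound.
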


\begin{proof} 
 Denote by $N$ the nilradical of the ring $R$. If $N =0$, we are done by Theorem \ref{UBS}, 
since $I^n \subseteq \overline{J^n}$.  By Noetherian induction, for any non-zero element
$x \in N$, the result holds, since the property of being a reduction 
is preserved modulo nilpotent elements.  We can assume that $x^2=0$.  
Let $k_1$ be an integer which works mod $xR$, and let $k_2$ be 
the UAR number for $xR \subseteq R$.  
Then $I^n \subseteq J^{n-k_1} + (xR \cap I^{n-k_1}) \subseteq J^{n-k_1} + x I^{n-k_1-k_2} \subseteq
J^{n-k_1} + x(J^{n-2k_1-k_2} + xR) \subseteq J^{n-2k_1-k_2}$.
\end{proof}

We need a notion of reduction that relates to the notion of $\kas$ sequences.

\begin{dfn}\label{specialReduction} Let $I$ be an $\m$-primary ideal
 and $\cc=c_1, \dots, c_d$ be  a $\kas$ sequence.
A special reduction of $I$ with respect to $\cc$ is a sequence $x_1, \dots, x_d$ which is well-suited to $\cc$ 
and verifies the following, for all integers $i$ such that $0 \le i \le d-1$:
\begin{enumerate}
\item $I_{d-i-1}$ is a reduction of $I_{d-i}$ modulo $(c_{d-i},  \dots, c_d)$
\item $I_{d-i-1}$ is a reduction of $I_{d-i}$ modulo $(0:(0:c_{d-i}))$,
\end{enumerate}
where $I_k$ denotes the ideal generated by $x_1, \dots, x_k$.  We also set $I_0 = 0$.
\end{dfn}

\begin{rmk} Special reductions exist if the residue field is infinite.
 We need to pick the $\kas$ sequence such that $\dim R/(0:(0:c_{d-i})) \le d-i-1$, 
 and this is possible by Theorem \ref{KASexistence}.   Suppose that we have picked a KAS, $\cc = c_1,\ldots, c_d$.   There are only finitely many ideals generated by the subsets of $\{c_1,\ldots, c_d\}$.  Thus, with an infinite field, one can choose  $d$ general generators of the ideal $I$  that will both be well-suited to $\cc$ and satisfy the reduction conditions (1) and (2) of Definition~\ref{specialReduction}.
\end{rmk}

We now collect the steps that will reduce the proof of the Main Theorem for the family of all ideals to the family of ideals generated by sequences that are  special reductions.

 \begin{ipg}\label{facts} The following statements hold:
 \begin{enumerate}
\item If the Main Theorem holds for the family of all $\m$-primary ideals then it will hold for the family of all ideals,
as  $I= \displaystyle \bigcap_{n \ge 0}(I+\m^n) $ by the Krull Intersection Theorem. 
\item In view of Theorem \ref{extendedUBS}, we can reduce the proof of our Main Theorem to a family of ideals that are special reductions of $\m$-primary ideals. 
\end{enumerate}
\end{ipg}

We now present the technique that will allow to use an induction on the number of elements of the sequences.
Before we need a technical lemma.

  \begin{lem} \label{celimination}
Let $\Rm$ be a local ring and let $N$ be a $d$th syzygy.
Let $\cc=c_1, \dots, c_d$  be  a $\kas$  sequence with 
respect to the family of modules that are $d$th syzygies, and 
let $t$ be the integer given by Proposition~\ref{KAScomplex} for $\cc$.
 Assume that the sequence of elements $x_1, \dots, x_d$ is well-suited to $\cc$.
Let $\mathbf{w}$ be an element in $N$.
If there exists an integer $h$ such that
$$ c^t_j \mathbf{w} \in (x_1,\dots,x_{j-1})^{n-h}N + (c_i^{(i-j+1)t}, \dots, c_{j+1}^{2t})
(x_1,\dots,x_j)^{n-h}N,$$
where $i>j$ and $ 1\le  i \le d$, 
then there exists 
$$\mathbf{w}_1 \in (x_1,\dots,x_j)^{n-h}N$$
such that 
 $$c^t_j(\mathbf{w}-\mathbf{w}_1) \in (x_1,\dots,x_{j-1})^{n-h}N + (c_{i-1}^{(i-j)t}, \dots , c_{j+1}^{2t}\big)(x_1,\dots,x_j)^{n-h}N.$$
\end{lem}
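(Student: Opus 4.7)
The plan is to reduce the desired conclusion to the single containment
\[
c_i^{(i-j+1)t}\mathbf{v} \;\in\; c_j^t(x_1,\ldots,x_j)^{n-h}N + L,
\]
where $L$ is the right-hand side of the sought conclusion and $\mathbf{v}$ is produced by unpacking the hypothesis. From that, \prpref{fromAGT}\,(4), applied with a shifted index, trades the large power of $c_i$ for a power of $c_j$.

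First, unpack the hypothesis as $c_j^t\mathbf{w} = \ell + c_i^{(i-j+1)t}\mathbf{v}$, with $\ell \in L$ and $\mathbf{v} \in (x_1,\ldots,x_j)^{n-h}N$. Finding $\mathbf{w}_1 \in (x_1,\ldots,x_j)^{n-h}N$ with $c_j^t(\mathbf{w}-\mathbf{w}_1) \in L$ is equivalent to producing such a $\mathbf{w}_1$ with $c_j^t\mathbf{w}_1 \equiv c_i^{(i-j+1)t}\mathbf{v} \pmod{L}$. Rearranging,
\[
c_i^{(i-j+1)t}\mathbf{v} = c_j^t\mathbf{w} - \ell \;\in\; V := \bigl((x_1,\ldots,x_{j-1})^{n-h} + (c_j^t, c_{j+1}^{2t},\ldots,c_{i-1}^{(i-j)t})\bigr)N,
\]
because $c_j^t\mathbf{w} \in c_j^tN$ and $\ell \in L \subseteq V$. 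The well-suitedness of $\xx$ to $\cc$, together with the $\kas$ property, ensures that the hypotheses of \prpref{fromAGT} are met by the complex
\[
B^{(x_1,\ldots,x_{j-1}),\,n-h}_\bullet \otimes K_\bullet\bigl(c_j^t, c_{j+1}^{2t},\ldots,c_i^{(i-j+1)t}; R\bigr),
\]
i.e.\ \prpref{fromAGT} with the role of $j$ played by $j-1$ and $i$ unchanged. Part~(4) of that proposition, applied to the $d$th syzygy $N$, then yields $c_i^t\mathbf{v} \in V$.

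Expanding $c_i^t\mathbf{v} = u' + c_j^t\eta_j + \sum_{k=j+1}^{i-1} c_k^{(k-j+1)t}\eta_k$ with $u' \in (x_1,\ldots,x_{j-1})^{n-h}N$ and $\eta_k \in N$, and then multiplying through by $c_i^{(i-j)t}$, identifies the natural candidate $\mathbf{w}_1 := c_i^{(i-j)t}\eta_j$: the resulting decomposition has $c_i^{(i-j)t}u' \in L$ automatically, together with cross terms of the form $c_k^{(k-j+1)t}c_i^{(i-j)t}\eta_k$. The main obstacle is to guarantee that $\mathbf{w}_1$ actually lies in $(x_1,\ldots,x_j)^{n-h}N$ and that the cross terms land in $L$, which reduces to refining the $\eta_k$'s so that $c_i^{(i-j)t}\eta_k \in (x_1,\ldots,x_j)^{n-h}N$ for each $k$. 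I expect this refinement to exploit the extra information $\mathbf{v} \in (x_1,\ldots,x_j)^{n-h}N$ by applying \prpref{fromAGT}\,(3) to a free cover $G \supseteq N$ with $G/N$ a $d$th syzygy, so as to intersect $V$ with $(x_1,\ldots,x_j)^{n-h}G$ and push the refined witness back into $N$.
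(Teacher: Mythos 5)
Your unpacking of the hypothesis and the first application of Proposition~\ref{fromAGT}(4) are correct, and they reproduce the base case $r=1$ of the paper's inductive claim: $c_i^t\mathbf{v}\in V$. But the attempted shortcut of multiplying through by $c_i^{(i-j)t}$ in one step does not close the argument, and the gap you flag is genuine; the proposed repair via Proposition~\ref{fromAGT}(3) is not the right tool. After writing $c_i^t\mathbf{v} = u' + c_j^t\eta_j + \sum_{k=j+1}^{i-1}c_k^{(k-j+1)t}\eta_k$, the coefficients $\eta_k$ are merely arbitrary elements of $N$ coming from one choice of decomposition; multiplying by $c_i^{(i-j)t}$ gives $c_i^{(i-j)t}\eta_k$, which there is no reason to expect in $(x_1,\ldots,x_j)^{n-h}N$. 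Proposition~\ref{fromAGT}(3) controls elements known to lie in $I^nG\cap N$, but nothing places your $\eta_k$ in $(x_1,\ldots,x_j)^{n-h}G$, and you have no mechanism to force a better choice of decomposition.

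What is actually needed, and what the paper does, is an iterative refinement that promotes the $c$-coefficients from $N$ into $(x_1,\ldots,x_j)^{n-h}N$ one at a time. One proves by induction on $r$, for $1\le r\le i-j+1$, that $c_i^{rt}\mathbf{v}$ lies in $(x_1,\ldots,x_{j-1})^{n-h}N$ plus $(c_{i-1}^{(i-j)t},\ldots,c_{i-r+1}^{(i-j-r+2)t})(x_1,\ldots,x_j)^{n-h}N$ plus $(c_{i-r}^{(i-j-r+1)t},\ldots,c_j^t)N$; the base case $r=1$ is exactly your $c_i^t\mathbf{v}\in V$. For the inductive step one uses the extra information $\mathbf{v}\in(x_1,\ldots,x_j)^{n-h}N$ (hence $c_i^{rt}\mathbf{v}\in(x_1,\ldots,x_j)^{n-h}N$) to show the coefficient $\mathbf{n}_{i-r}$ of $c_{i-r}^{(i-j-r+1)t}$ lies in a colon ideal with respect to $c_{i-r}^{(i-j-r+1)t}$; Proposition~\ref{fromAGT}(4) then places $c_i^t\mathbf{n}_{i-r}$ in $(x_1,\ldots,x_j)^{n-h}N$ modulo the lower $c$'s, and one multiplies the $r$-th relation by $c_i^t$ and substitutes. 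Taking $r=i-j+1$ and feeding the result back into the hypothesis gives $c_j^t\mathbf{w}\in(x_1,\ldots,x_{j-1})^{n-h}N + (c_{i-1}^{(i-j)t},\ldots,c_j^t)(x_1,\ldots,x_j)^{n-h}N$, from which $\mathbf{w}_1$ is read off as the coefficient of $c_j^t$. So your first step is on track; what is missing is this ``one coefficient at a time'' induction by repeated use of Proposition~\ref{fromAGT}(4), which the single multiplication by $c_i^{(i-j)t}$ cannot replace.
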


\begin{proof}
For notational simplicity, replace each $c_k^t$ by $c_k$, and let $I_k$ be the ideal $ (x_1,\ldots, x_k)$ for every integer $k$ such $1 \le k \le d$.

 Let ${\bf v}_i \in (x_1,\dots,x_j)^{n-h}N$ be the coefficient of $c_i^{i-j+1}.$
 Then 
 \begin{equation*}
  c_i^{i-j+1}{\bf v}_i  \in  c_jN +I_{j-1}^{n-h}N+ (c_{i-1}^{i-j}, \ldots,  c_{j+1}^2) I_j^{n-h}N.
 \end{equation*}
 In particular, 
 ${\bf v}_i \in \left(I_{j-1}^{n-h}N + (c_{i-1}^{i-j}, \dots, c_{j+1}^2, c_j)N\right):c_i^{i-j+1}$.
% and
% $${\bf v}_i \in \left((x_1,\dots, x_{j-1})^{n-h} +(c_{i-1}^{i-j}, \dots, c_{j+1}^2, c_j )\right)N: c_i^{i-j+1}.$$ 
By Proposition~\ref{fromAGT} (4),
 \begin{equation}\label{eqn1}
 c_i {\bf v}_i \in I_{j-1}^{n-h}N + (c_{i-1}^{i-j}, \dots,  c_{j+1}^2, c_j) N.
 \end{equation} 

We  claim that for every integer $r$ such that $1\le r\le i-j+1$, we have 
\begin{equation}\label{eqn2}
c_i^r {\bf v}_i \in I_{j-1}^{n-h}N + (c_{i-1}^{i-j}, \ldots, c_{i-r+1}^{i-j-r+2})I_j^{n-h}N+ 
  (c_{i-r}^{i-j-r+1}, \dots , c_{j+1}^2 , c_j) N,
\end{equation}
and we will prove  the claim by induction on $r$. 

The claim is true for $r=1$ by \ref{eqn1}.
 Assume the claim is true for $r\ge 1$. 

Let ${\bf n}_{i-r} \in N$ be the coefficient of $c_{i-r}^{i-j-r+1}$ 
in \ref{eqn2}. Then 
\begin{equation*}
{\bf n}_{i-r} \in \left(I_j^{n-h}+
(c_{i-r-1}^{i-j-r},\dots ,c_{j+1}^2 , c_j)  \right)N:c_{i-r}^{i-j-r+1}.
\end{equation*} 
By Proposition~\ref{fromAGT} (4),
 \begin{equation*}
 c_i{\bf n}_{i-r} \in \left(I_j^{n-h}+ 
(c_{i-r-1}^{i-j-r},\dots ,c_{j+1}^2 , c_j)  \right)N.
 \end{equation*} 
 Multiplying \ref{eqn2} by $c_i$ and substituting for $c_i \mathbf{n}_{i-r}$ yields
\begin{equation*}
c_i^{r+1}{\bf v}_i \in I_{j-1}^{n-h}N + (c_{i-1}^{i-j},\dots, c_{i-r+1}^{i-j-r+2}, c_{i-r}^{i-j-r+1})I_j^{n-h}N +
( c_{i-r-1}^{i-j-r} ,
 \dots, c_{j+1}^2, c_j )N,
 \end{equation*}
  as desired. 
Therefore, the claim is true for all $r=1,\dots, i-j+1$. 

In particular, for $r=i-j+1$, we obtain 
\begin{eqnarray*}
c_i^{i-j+1}{\bf v}_i \in I_{j-1}^{n-h}N +
 (c_{i-1}^{i-j}, \dots, c_j)I_j^{n-h}N.
 \end{eqnarray*}
By hypothesis, we have the following containment
\begin{eqnarray*}
c_j {\bf w} \in I_{j-1}^{n-h}N + c_i^{i-j+1} {\bf v}_i + (c_{i-1}^{i-j},\dots,c_{j+1}^2)I_j^{n-h}N,
\end{eqnarray*}
 which, together with the previous containment, gives
 \begin{eqnarray*}
c_j {\bf w} \in I_{j-1}^{n-h}N + (c_{i-1}^{i-j}, \dots, c_{j+1}^2, c_j)I_j^{n-h}N.
\end{eqnarray*}
We conclude that there exists ${\bf w}_1 \in I_j^{n-h}N$ such
 that 
 \begin{equation*}
 c_j({\bf w}-{\bf w}_1) \in I_{j-1}^{n-h}N + (c_{i-1}^{i-j}, \dots, c_{j+1}^2)I_j^{n-h}N,
   \end{equation*}
   which concludes the proof.
\end{proof}

We will now focus our attention on choosing the sequence 
$x_1, \dots, x_d$ to which we can apply the previous lemma.

With the same notation as in \ref{specialReduction}, we can state the main reduction that is used to prove the main theorem.

\begin{prp}\label{mainreduction}
Let $(R,\m,\k)$ be a local ring of dimension $d$ with an
 infinite residue field. Let
 $\cc=c_1,\dots, c_d$ be a $\kas$-sequence  of $R$,  and
   let  $0 \to N \to G \to M \to 0$ be a short exact sequence
 of finitely generated $R$-modules where $M$ is a $d$th syzygy and $G$ is a free $R$-module. 
 
 There exists an integer $h$, depending only on 
 $\cc$, such that if $I$ is an $\m$-primary ideal and $x_1,\dots, x_d$
  is a special reduction  of $I$,  then for all integers $i$ such that $0 \le i \le d-1$ and for all $ n \ge h$, the following inclusion holds
$$ I_{d-i}^n G \cap N \subseteq I_{d-i}^{n-h}N + I_{d-i-1}^{n-h}G \cap N.$$
\end{prp}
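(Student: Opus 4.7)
Plan.  Fix $j=d-i$ and take $\mathbf{w}\in I_j^nG\cap N$.  Since $G$ is free and $M=G/N$ is a $d$th syzygy, $N$ is a $(d+1)$st syzygy and in particular lies in $\mathcal{M}$, so the consequences of Proposition~\ref{fromAGT} apply with $M$ replaced by $N$.  The plan proceeds in three stages: (i) rewrite $c_j^t\mathbf{w}$ (modulo an explicit correction in $I_j^{n-h_1}N$) into the form required by the hypothesis of Lemma~\ref{celimination}; (ii) iterate that lemma $d-j$ times to strip off $c_d,c_{d-1},\ldots,c_{j+1}$; and (iii) extract $\mathbf{w}^*\in I_{j-1}^{n-h}G\cap N$ from the surviving relation $c_j^t\mathbf{w}^*\in I_{j-1}^{n-h_1}N$ by using condition~(2) of the special reduction.

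For stage (i), Proposition~\ref{fromAGT}(3) applied with its internal $\mathfrak{i}=j$ and no middle $c$-generators immediately gives $c_j^t\mathbf{w}\in I_j^nN$.  Condition~(1) of special reduction combined with Theorem~\ref{extendedUBS} in the quotient $R/(c_j,\ldots,c_d)$ yields a uniform $k$ with $I_j^n\subseteq I_{j-1}^{n-k}+(c_j,\ldots,c_d)$, and Huneke's uniform Artin--Rees for the inclusion $(c_j,\ldots,c_d)\subseteq R$ sharpens this to $I_j^n\subseteq I_{j-1}^{n-k}+(c_j,\ldots,c_d)\,I_j^{n-h^{\mathrm{AR}}}$.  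Iterating the substitution $s$ times gives $I_j^n\subseteq I_{j-1}^{n-h_1}+(c_j,\ldots,c_d)^sI_j^{n-h_1}$ for $h_1=(s-1)h^{\mathrm{AR}}+k$.  Expand $(c_j,\ldots,c_d)^s$ into monomials $c_j^{s_j}c_{j+1}^{s_{j+1}}\cdots c_d^{s_d}$: for $s$ large enough the pigeonhole principle guarantees that every monomial with $s_j<t$ has some $s_k\ge(d-j+1)t$ with $k>j$, and hence lies in $(c_{j+1}^{2t},c_{j+2}^{3t},\ldots,c_d^{(d-j+1)t})$, while every monomial with $s_j\ge t$ is absorbed by $c_j^t I_j^{n-h_1}$.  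Tensoring with $N$ and rearranging, $c_j^t\mathbf{w}=r+c_j^tu+v$ with $u\in I_j^{n-h_1}N$, $r\in I_{j-1}^{n-h_1}N$ and $v\in(c_{j+1}^{2t},\ldots,c_d^{(d-j+1)t})I_j^{n-h_1}N$; transferring $c_j^tu$ to the left gives
\[
c_j^t(\mathbf{w}-u)\in I_{j-1}^{n-h_1}N+(c_d^{(d-j+1)t},\ldots,c_{j+1}^{2t})I_j^{n-h_1}N,
\]
which is exactly the hypothesis of Lemma~\ref{celimination} with the lemma's internal ``$i$'' set equal to $d$.

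Stage (ii) is a direct iteration: $d-j$ successive applications of Lemma~\ref{celimination} peel off $c_d,c_{d-1},\ldots,c_{j+1}$ one at a time, producing elements $\mathbf{w}_1,\ldots,\mathbf{w}_{d-j}\in I_j^{n-h_1}N$ such that $\mathbf{w}^*:=\mathbf{w}-u-\mathbf{w}_1-\cdots-\mathbf{w}_{d-j}$ satisfies $\mathbf{w}-\mathbf{w}^*\in I_j^{n-h_1}N$ and $c_j^t\mathbf{w}^*\in I_{j-1}^{n-h_1}N$.  For stage (iii) I work coordinatewise in a basis of the free module $G$: each coordinate $w^*_\alpha\in R$ of $\mathbf{w}^*$ satisfies $c_j^tw^*_\alpha\in I_{j-1}^{n-h_1}$, and Huneke's uniform Artin--Rees for the inclusion $c_j^tR\subseteq R$ gives $w^*_\alpha\in I_{j-1}^{n-h_1-h_2}+(0:_Rc_j^t)$.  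The residual $(0:_Rc_j^t)\subseteq(0:_Rc_j^\infty)$ piece is handled by condition~(2) of the special reduction ($I_{j-1}$ is a reduction of $I_j$ modulo $(0:(0:c_j))$) together with the dimension estimate $\dim R/(0:(0:c_j))\le j-1$ from Theorem~\ref{KASexistence}: these combine with Proposition~\ref{fromAGT}(4) (applied with the ``denominator'' $c_j^\infty$) to place the obstruction inside $I_{j-1}^{n-h}G\cap N$, possibly after a further correction by an element absorbed into $I_j^{n-h}N$.  Combining all three stages gives $\mathbf{w}\in I_j^{n-h}N+I_{j-1}^{n-h}G\cap N$ for a uniform $h$ depending only on $\cc$, as claimed.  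I expect the main obstacle to be the last part of stage (iii): converting the $c_j^\infty$-torsion summand $(0:_Rc_j^t)$ into something controlled by a power of $I_{j-1}$, which is precisely what condition~(2) of the special reduction and the dimension bound built into the $\kas$ definition were designed to make possible.
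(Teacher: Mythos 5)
Your stages (i) and (ii) follow the paper's argument. Stage (i) arrives at the same containment
$c_j^t(\mathbf{w}-u)\in I_{j-1}^{n-h_1}N+(c_d^{(d-j+1)t},\ldots,c_{j+1}^{2t})I_j^{n-h_1}N$
that serves as the entry point to Lemma~\ref{celimination}; you produce the prescribed powers of the $c$'s by iterating the substitution and a pigeonhole count on the monomials of $(c_j,\ldots,c_d)^s$, whereas the paper observes directly that $I_{j-1}$ is a reduction of $I_j$ modulo the ideal generated by those specific powers (it has the same radical as $(c_j,\ldots,c_d)$), applies Theorem~\ref{extendedUBS} once, and then uses a single uniform Artin--Rees containment. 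The bookkeeping is equivalent. Stage (ii) is exactly the paper's repeated application of Lemma~\ref{celimination}, and your observation that $N$ is itself a $d$th syzygy (being a first syzygy of the $d$th syzygy $M$) is precisely what legitimizes invoking that lemma and Proposition~\ref{fromAGT} for $N$.

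Stage (iii), the step you yourself flag, does not close as written. After the split $\mathbf{w}^*=\mathbf{y}+\mathbf{z}$ with $\mathbf{y}\in I_{j-1}^{n-h_1-h_2}G$ and $\mathbf{z}$ killed by $c_j$, condition (2) of Definition~\ref{specialReduction} is useless by itself: it gives $I_j^{m}\subseteq I_{j-1}^{m-h_6}+(0:_R(0:_Rc_j))$, which only helps if the torsion piece is already known to carry a high power of $I_j$ as a coefficient. The missing step is a further uniform Artin--Rees application to the fixed submodule $(0:_Gc_j)\subseteq G$: since $\mathbf{z}=\mathbf{w}^*-\mathbf{y}\in I_j^{n-h_4}G\cap(0:_Gc_j)$, one gets $\mathbf{z}\in I_j^{n-h_5}(0:_Gc_j)$, and only then does condition (2), multiplied against $(0:_Gc_j)$ via the identity $(0:_R(0:_Rc_j))\cdot(0:_Gc_j)=0$ (valid because $G$ is free), push $\mathbf{z}$ into $I_{j-1}^{n-h_7}G$; since $\mathbf{w}^*\in N$, this lands $\mathbf{w}^*$ in $I_{j-1}^{n-h_7}G\cap N$. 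Your appeal to Proposition~\ref{fromAGT}(4) here is misplaced: that statement controls the $c_i^\infty$-saturation of submodules of the form $\bigl(I_j^n+(c_{j+1}^{t_{j+1}},\ldots,c_{i-1}^{t_{i-1}})\bigr)M$ and is the engine inside Lemma~\ref{celimination}; it says nothing about elements of $(0:_Rc_j)$ that come with no power of $I_j$ attached. With the extra Artin--Rees step and the annihilator identity inserted, your outline coincides with the paper's proof.
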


\begin{proof} 
Let $t$ be the exponent for $\cc$ given by Proposition~\ref{KAScomplex}.  
For simplicity of notation, we replace each $c_i^t$ by $c_i$, for each integer $i$ such that 
 $1 \le i \le d$.

By Proposition~\ref{fromAGT} (3) with $i=j$, $c_{d-i} (I_{d-i}^n G \cap N) \subseteq I_{d-i}^n N.$ 
In particular, given an  element ${\bf w} \in I_{d-i}^n G \cap N$, we obtain  $c_{d-i} {\bf w} \in I_{d-i}^n N.$ 
Since $I_{d-i-1}$ is a reduction of $I_{d-i}$ modulo the ideal $(c_d^{i+1},\dots, c_{d-i+1}^2, c_{d-i})$, 
 there exists an integer $h_1$, which, by Proposition \ref{extendedUBS},
  depends only on the $\kas$ sequence $\cc$, such that
\begin{eqnarray}\label{eq1}
I_{d-i}^n \subseteq I_{d-i-1}^{n-h_1} + (c_d^{i+1},\dots, c_{d-i+1}^2, c_{d-i})R \cap I_{d-i}^{n-h_1},
\end{eqnarray}
for all $n \ge h_1$.

By the uniform Artin-Rees property,  there exists an integer $h_2$,  
depending only on the $\kas$ sequence $\cc$, such that 
$$
 (c_d^{i+1},\dots, c_{d-i+1}^2, c_{d-i})R \cap I_{d-i}^{n-h_1}\subseteq  (c_d^{i+1},\dots, c_{d-i+1}^2, c_{d-i}) I_{d-i}^{n-h_1-h_2}.
 $$
 Combining this last inclusion with inclusion \ref{eq1}, and setting $h_3$ equal to $h_1+h_2$, we obtain 
$$I_{d-i}^n \subseteq I_{d-i-1}^{n-h_3} + (c_d^{i+1},\dots, c_{d-i+1}^2, c_{d-i})I_{d-i}^{n-h_3},$$
for all $n \ge h_3$.
Thus the containment $c_{d-i} {\bf w} \in I_{d-i-1}^{n-h_3}N + (c_d^{i+1},\dots, c_{d-i+1}^2, c_{d-i})I_{d-i}^{n-h_3}N$  holds. It follows that 
 there exists an element ${\bf w}_1 \in I_{d-i}^{n-h_3}N$  such that 
$$c_{d-i}({\bf w}-{\bf w}_1) \in I_{d-i-1}^{n-h_3} N + (c_d^{i+1},\dots, c_{d-i+1}^2)I_{d-i}^{n-h_3}N .$$

By  a repeated application of Lemma \ref{celimination}, 
 there are elements  ${\bf w}_2, \ldots, {\bf w}_{i+1} \in I_{d-i}^{n-h_3}N$ satisfying 
$$c_{d-i} ({\bf w}-{\bf w}_1-{\bf w}_2-\cdots- {\bf w}_{i+1}) \in I_{d-i-1}^{n-h_3}N 
\subseteq I_{d-i-1}^{n-h_3}G \cap c_{d-i} G. $$
By the uniform Artin-Rees property, there exists an integer  $h_4 \ge h_3$, 
 depending only on the element $c_{d-i}$,
 such that $I_{d-i-1}^{n-h_3}G \cap c_{d-i} G \subseteq c_{d-i}I_{d-i-1}^{n-h_4} G$.  
 So we obtain
$$ c_{d-i} ({\bf w}-{\bf w}_1-{\bf w}_2-\dots- {\bf w}_{i+1}) \in c_{d-i}I_{d-i-1}^{n-h_4} G,$$
for all $n \ge h_4$.

Therefore there exists an element  ${\bf f} \in I_{d-i-1}^{n-h_4}G$ such that 
$$c_{d-i} ({\bf w}-{\bf w}_1-{\bf w}_2-\dots-{\bf w}_{i+1}-{\bf f})=0,$$
and then  ${\bf w}-{\bf w}_1-{\bf w}_2-\dots-{\bf w}_{i+1}={\bf f}+{\bf z},$ where ${\bf z} \in (0:_G c_{d-i})G.$ 
It follows that  
$${\bf z} \in I_{d-i}^{n-h_4}G \cap (0:_G c_{d-i}).$$
 By the uniform Artin-Rees property, there exists an integer $h_5 \ge h_4$, which depends only on the element $c_{d-i}$,  such that
 $ I_{d-i}^{n-h_4}G \cap (0:_G c_{d-i})\subseteq I_{d-i}^{n-h_5} (0:_G c_{d-i})$. 
Thus ${\bf z} \in I_{d-i}^{n-h_5} (0:_G c_{d-i})$, for all $n \ge h_5.$\\

As $I_{d-i-1}$ is a reduction of $I_{d-i}$ modulo $(0:_R(0:_Rc_{d-i})),$ 
there exists an integer $h_6$, depending only on 
$c_{d-i}$, such that   $I_{d-i}^{n} \subseteq I_{d-i-1}^{n-h_6} + (0:_R(0:_Rc_{d-i}))$, for all $n \ge h_6.$
 Thus, by setting $h_7$ equal to $h_5+h_6$, we obtain
$$I_{d-i}^{n-h_5} (0:_G c_{d-i}) G \subseteq I_{d-i-1}^{n-h_7} (0:_G c_{d-i}) G \subseteq I_{d-i-1}^{n-h_7} G,$$
which implies that  ${\bf f}+{\bf z} \in I_{d-i-1}^{n-h_7}G.$ 

As  ${\bf w}-{\bf w}_1-\dots-{\bf w}_{i+1} \in N$
 and ${\bf w}-{\bf w}_1-\dots-{\bf w}_{i+1} = {\bf f}+{\bf z}$, we have 
${\bf w}-{\bf w}_1-\dots-{\bf w}_{i+1} \in I_{d-i-1}^{n-h_7}G \cap N.$
Thus, ${\bf w} \in I_{d-i}^{n-h_7}N + I_{d-i-1}^{n-h_7}G \cap N$ as desired.
\end{proof}

We are finally ready to prove the theorem, from which the Main Theorem in the introduction follows as a corollary.

\begin{thm}\label{mainthm}
Let $(R,\m, k)$ be a local ring of dimension $d$.
  There exists an integer $h$ such that for all ideals 
  $I \subseteq R$ and for all short exact sequences of finitely
   generated modules $0 \to A \to B \to M \to 0$ with $M$ is a
    $d$th syzygy,
\begin{equation*}
I^n B \cap A \subseteq  I^{n-h} A.
\end{equation*}
\end{thm}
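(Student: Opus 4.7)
The plan is to chain together the reductions established in \ref{facts}, Theorem~\ref{extendedUBS}, and Proposition~\ref{mainreduction}. First, passing to a faithfully flat local extension with infinite residue field, I may assume $R/\m$ is infinite, since both $I^{n}B\cap A$ and $I^{n-h}A$ behave compatibly under such extensions. Next, I would reduce to the case where $B$ is free: choose a surjection $F\twoheadrightarrow B$ with $F$ finitely generated free and kernel $K$, let $A'\subseteq F$ be the preimage of $A$, so that $A'/K\cong A$ and $0\to A'\to F\to M\to 0$ is exact. A short diagram chase shows that any $x\in I^{n}B\cap A$ lifts to some $y\in I^{n}F\cap A'$ (surjectivity of $I^{n}F\to I^{n}B$ and membership in $A'$), so a uniform bound $I^{n}F\cap A'\subseteq I^{n-h}A'$ descends verbatim to $I^{n}B\cap A\subseteq I^{n-h}A$ with the same $h$. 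Finally, by \ref{facts}(1), the Krull intersection theorem reduces matters to the case where $I$ is $\m$-primary.

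Now fix once and for all a $\kas$ sequence $\cc=c_1,\dots,c_d$ for the family of $d$th syzygies as provided by Theorem~\ref{KASexistence} (with the dimension condition on $(0:(0:c_i))$ that guarantees existence of special reductions), and let $h_0$ denote the integer furnished by Proposition~\ref{mainreduction}. For each $\m$-primary ideal $I$, the infinite residue field assumption lets me choose a special reduction $J=(x_1,\dots,x_d)$ of $I$ with respect to $\cc$. Theorem~\ref{extendedUBS} then supplies a single integer $k$, depending only on $R$, such that $I^{n}\subseteq J^{n-k}$ for every $n\ge k$ and every minimal reduction $J$ of every $\m$-primary $I$.

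Writing $I_j=(x_1,\dots,x_j)$ and $I_0=0$, so that $I_d=J$, I would iterate Proposition~\ref{mainreduction} for $i=0,1,\dots,d-1$, replacing $n$ by $n-ih_0$ at each step. The telescoping gives
\begin{equation*}
J^{n}F\cap A'\;\subseteq\;\sum_{j=1}^{d} I_{j}^{\,n-(d-j+1)h_0}A'\;\subseteq\;J^{\,n-dh_0}A',
\end{equation*}
where the second inclusion uses $I_j\subseteq J$ and $d-j+1\le d$. Combining this with the Brian\c con--Skoda step yields
\begin{equation*}
I^{n}F\cap A'\;\subseteq\;J^{n-k}F\cap A'\;\subseteq\;J^{\,n-k-dh_0}A'\;\subseteq\;I^{\,n-k-dh_0}A',
\end{equation*}
so the uniform constant $h:=k+dh_0$ works. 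The genuine difficulty has been absorbed into the earlier machinery rather than this final assembly: the essential point is that each ``loss'' encountered above (base change, free cover, Krull intersection, uniform Brian\c con--Skoda, and especially the peeling in Proposition~\ref{mainreduction} which in turn rests on Lemma~\ref{celimination} and the $\kas$ annihilation in Proposition~\ref{fromAGT}) is controlled by a constant that depends only on $R$ through $\cc$, never on $I$, $M$, $A$, or $B$. Once that uniformity is in place, the iteration above simply packages these independent constants into a single bound.
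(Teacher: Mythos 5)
Your proposal is correct and follows essentially the same route as the paper's proof: reduce to a free middle term via a free cover of $B$, pass to $\m$-primary ideals and special reductions using \ref{facts} and Theorem~\ref{extendedUBS}, and telescope Proposition~\ref{mainreduction} $d$ times to get $h=k+dh_0$. The only small omission is that you should pass to the completion as well as to an infinite residue field (both are faithfully flat reductions), since Theorem~\ref{extendedUBS} is stated for complete local rings.
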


\begin{proof}
If there exists such a bound for any faithfully flat extension of $R$ then the same bound holds for $R$.  
Thus, without loss of generality, we may assume that $R$ is complete and has an infinite residue field. 

We can reduce to the case that the middle module is free as follows.  Let $G$ be a free module mapping onto $B$ and let $N$  be the kernel of the composite map from $G$ to $M$.  Then $G/N \cong M$ and $N$ maps to $A$ (via the snake lemma).  If $w \in I^nB \cap A$, then we may lift $w$ to $w_1 \in I^nG$.  Since $w$ maps to $0$ in $M$, so does $w_1$, i.e., $w_1 \in I^n G \cap N$.  So, if $w_1 \in I^{n-h}N$ then $w \in I^{n-h}A$.  For the rest of the proof, we keep the notation that $G$ is free and $G/N \cong M$.

Fix  a $\kas$ sequence $\cc$  for the class of finitely generated $d$th syzygies of $R$,
as given by Theorem~\ref{KASexistence}. By \ref{facts}, it is enough to find
 an integer  $h$ such that  $J^nG \cap N \subseteq J^{n-h}N$ for all ideals $J$ that are special reductions of $\m$-primary ideals with respect to the $\kas$ sequence $\cc$.
 
 Let $I= I_d=(x_1, \dots, x_d)$ be a special reduction.  
 Let $k$ be the integer given by Proposition~\ref{mainreduction}.
%  let  $h''$ be the integer given by Proposition~\ref{principal}, and set $k=\max\{h', h''\}$.
 By  $d$ applications of Proposition~\ref{mainreduction}, %  together with Proposition~\ref{principal}, we show that
  
\begin{align*}
I^n G \cap N& \subseteq I^{n-k}N +( I_{d-1}^{n-k}G \cap N) \subseteq \cdots \\
  & \subseteq I^{n-k}N + I_{d-1}^{n-2k}N + \cdots + I_2^{n-(d-1)k}N+\left( I_1^{n-(d-1)k}G \cap N\right) \subseteq I^{n- dk}N.
\end{align*}
Hence $h = dk$ suffices.
\end{proof}

\begin{cor}\label{syzAR}
Let $(R,\m,k)$ be a local ring.  Then every finitely generated module is syzygetically Artin-Rees with respect to every ideal in $R$. 
\end{cor}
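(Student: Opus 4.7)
The plan is to combine Theorem~\ref{mainthm}, which uniformly controls all syzygies of degree at least $d$, with Huneke's uniform Artin-Rees theorem over ideals from \cite{Hune-92}, which supplies uniformity for each of the finitely many syzygies of lower degree. Fix a finitely generated $R$-module $M$ together with a free resolution $(F_\bullet,\partial_\bullet)$ of $M$ by finitely generated free modules; since syzygetic Artin-Rees is independent of the chosen resolution by Lemma~2.1 of \cite{Stri-06b}, this is no restriction.

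For every index $i \ge d$, the image $\Im \partial_i$ is an $i$-th, and therefore a $d$-th, syzygy of $M$, so the short exact sequence $0 \to \Im \partial_{i+1} \to F_i \to \Im \partial_i \to 0$ satisfies the hypotheses of Theorem~\ref{mainthm}. This furnishes a single integer $h_0$, independent both of $i$ and of $I$, such that
\[
I^n F_i \cap \Im \partial_{i+1} \subseteq I^{n-h_0}\Im \partial_{i+1}
\]
for every ideal $I \subseteq R$ and every $n \ge h_0$.

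For each of the finitely many remaining indices $0 \le i \le d-1$, the inclusion $\Im \partial_{i+1} \subseteq F_i$ is a fixed pair of finitely generated $R$-modules. Huneke's uniform Artin-Rees theorem applies to each such pair in the local ring $R$ and yields an integer $h_i$ controlling the corresponding containment uniformly over all ideals. Setting $h = \max\{h_0, h_1, \ldots, h_{d-1}\}$ then gives a bound that works for every ideal and every homological degree, as required. There is no substantive obstacle here: the argument is essentially bookkeeping, partitioning the homological degrees into two ranges and quoting the appropriate pre-existing uniform Artin-Rees statement on each. The only point worth checking is that one invokes a version of Huneke's theorem yielding uniformity over \emph{all} ideals of a local ring, rather than merely over maximal ideals, and this is exactly what is established in Section~4 of \cite{Hune-92}.
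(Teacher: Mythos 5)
Your proof is correct and follows the same decomposition as the paper's: Theorem~\ref{mainthm} gives a single bound for all homological degrees $i \ge d$, and the finitely many degrees $0 \le i \le d-1$ are handled separately. The only divergence is in that second range: the paper simply fixes the ideal $I$ and quotes the classical Artin-Rees lemma for each of the finitely many inclusions $\Im\partial_{i+1} \subseteq F_i$ (which suffices, since the corollary only asserts the property with respect to each single ideal), whereas you invoke Huneke's uniform Artin-Rees theorem from \cite{Hune-92} --- a legitimate appeal, since every Noetherian local ring is essentially of finite type over a local ring, and one that in fact yields the slightly stronger conclusion that the bound depends on $M$ but not on $I$.
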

\begin{proof}
By Theorem~\ref{mainthm}, there exists an integer $h$ such that if $I$ is any ideal and if ($F_\bullet, \partial_\bullet)$ is a free resolution of $M$, then $I^n F_i \cap \partial_{i+1}(F_{i+1}) \subseteq I^{n-h} \partial_{i+1}(F_{i+1})$ for $i \ge d = \dim R$.  Once $I$ is fixed, then there are also  Artin-Rees numbers for the earlier syzygies, so the maximum of $h$ and these numbers works for $I$ and $M$.
\end{proof}

A second corollary has to do with perturbations of resolutions.  Let $(R,\m)$ be a local ring.  A complex $G'_\bullet$ is a perturbation of $G_\bullet$ to orders $q_1, q_2, \ldots $ if the free modules in each are the same and the difference of the $n$th differentials maps $G_n$ to $\m^{q_n} G_{n-1}$.  Eisenbud and Huneke raise the following question (Question C in \cite{EisHun-02}):  If $G_\bullet$ is a minimal free resolution of the finitely generated module $M$, is there a number $q$ such that any complex $G'_\bullet$ which is a perturbation of $G_\bullet$ to order $q, q, \ldots $ is exact?

\begin{cor}
Let $(R,\m)$ be a local ring of dimension $d$.  Question C has an affirmative answer.  Moreover, for the class of $d$th syzygy modules, the integer $q$ may be chosen depending only on the ring.
\end{cor}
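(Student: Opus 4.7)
The plan is to deduce exactness of a perturbation $G'_\bullet$ by a Newton-style successive approximation, using Theorem~\ref{mainthm} as the one ingredient that can be controlled uniformly over all ideals and all $d$th syzygies, namely its Artin-Rees bound $h$. Fix the minimal free resolution $(G_\bullet,\partial_\bullet)$ of $M$, choose $q$ sufficiently larger than $h$ (say $q \ge 2h+1$), and write $\partial'_n = \partial_n + \delta_n$ with $\delta_n(G_n) \subseteq \m^q G_{n-1}$. Given $z \in \ker \partial'_n$ at some $n \ge 1$, the aim is to produce $y \in G_{n+1}$ with $\partial'_{n+1}y = z$.

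The key observation is $\partial_n z = -\delta_n z \in \m^q G_{n-1} \cap \im \partial_n$. Since $G_{n-1}/\im \partial_n$ is the $(n-1)$th syzygy of $M$, and hence a $d$th syzygy whenever $M$ itself is, Theorem~\ref{mainthm} yields $\partial_n z \in \m^{q-h}\im \partial_n$, so $\partial_n z = \partial_n u_1$ for some $u_1 \in \m^{q-h} G_n$. Exactness of $G_\bullet$ gives $z - u_1 = \partial_{n+1} v_1$, and setting $z_1 := u_1 - \delta_{n+1}v_1 = z - \partial'_{n+1}v_1$ one checks that $z_1 \in \ker \partial'_n \cap \m^{q-h} G_n$. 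Iterating this step would produce a sequence $z_k \in \m^{k(q-h)} G_n \cap \ker \partial'_n$ with $z - z_k \in \im \partial'_{n+1}$ for every $k$, so that $z \in \im \partial'_{n+1} + \m^{k(q-h)} G_n$ for every $k$. The Krull intersection theorem, applied to the finitely generated $R$-module $G_n/\im \partial'_{n+1}$, then forces $z \in \im \partial'_{n+1}$.

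For the non-uniform part of the corollary (general $M$), the argument above requires only Artin-Rees for the single ideal $\m$ and the inclusions $\im \partial_n \subseteq G_{n-1}$. At degrees $n \ge d+1$ the $(n-1)$th syzygy of $M$ is a $d$th syzygy, so the uniform bound $h$ still applies. At each of the finitely many remaining degrees $1 \le n \le d$, the classical Artin-Rees lemma supplies an $\m$-adic bound $h_n = h_n(M)$; taking $q$ strictly larger than $2\max(h, h_1, \ldots, h_d)$ settles Question~C, with $q$ depending on $M$ only through the initial syzygies. When $M$ is itself a $d$th syzygy, those early degrees do not intervene and any $q > 2h$ works, yielding the uniform second assertion.

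The main obstacle is the $\m$-adic bookkeeping through the iteration. At the $k$th step, $\delta_n$ sends $z_k \in \m^{\alpha_k} G_n$ into $\m^{\alpha_k + q} G_{n-1}$; Theorem~\ref{mainthm} recovers an $\m^h$ factor and places $u_{k+1}$ in $\m^{\alpha_k + q - h} G_n$; ordinary $\m$-adic Artin-Rees then loses another $\m^h$ when lifting $z_k - u_{k+1}$ to $v_{k+1} \in \m^{\alpha_k - h} G_{n+1}$; and $\delta_{n+1}$ produces $\delta_{n+1} v_{k+1} \in \m^{\alpha_k + q - h} G_n$. Because $q > h$, the exponent $\alpha_{k+1} = \alpha_k + (q-h)$ strictly increases, which is what drives the iteration toward the Krull-intersection conclusion. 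Once these estimates are in place, the remaining verifications ($\partial'_n z_1 = 0$, the passage to the quotient, the matching choice of $q$ for each degree) are routine.
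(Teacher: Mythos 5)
Your proposal is correct, but it is not the route the paper takes: the paper's ``proof'' consists entirely of a pointer to Proposition~1.1 of \cite{EisHun-02}, whereas you reconstruct that cited argument from scratch. The two are complementary. The paper buys brevity by deferring; your version buys self-containment by running the standard Newton-style successive approximation directly out of Theorem~\ref{mainthm}. Your $\m$-adic bookkeeping checks out: $\partial_n z = -\delta_n z \in \m^q G_{n-1}\cap\im\partial_n$ falls into $\m^{q-h}\im\partial_n$ by Theorem~\ref{mainthm} because $G_{n-1}/\im\partial_n = \Omega^{n-1}M$ is a $d$th syzygy whenever $M$ is; the recursion $\alpha_{k+1}=\alpha_k+(q-h)$ does require $\alpha_k\ge h$ to control $v_{k+1}\in\m^{\alpha_k-h}G_{n+1}$, and that is exactly why $q\ge 2h+1$ (and not merely $q>h$) is the right threshold, with the base case $k=0$ handled by an uncontrolled lift $v_1\in G_{n+1}$; and the conclusion $z\in\bigcap_k\bigl(\im\partial'_{n+1}+\m^{k(q-h)}G_n\bigr)=\im\partial'_{n+1}$ is a correct application of Krull intersection to $G_n/\im\partial'_{n+1}$. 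The split of the non-uniform part into degrees $n\ge d+1$ (uniform $h$ from Theorem~\ref{mainthm}) and the finitely many degrees $n\le d$ (classical Artin-Rees for $\m$, giving the dependence of $q$ on $M$) is also the right decomposition. So your write-up is a faithful and correct stand-in for the omitted citation.
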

\begin{proof}
We refer the reader to Proposition 1.1 of \cite{EisHun-02} for details of how the  syzygetic  Artin-Rees property is connected to the perturbation question.
\end{proof}

%\begin{thm} Let $(\Rmk)$ be a local notherian ring. Every finitely generated $R$-module $M$ is syzygetically Artin-Rees with respect to the family of $\m$-primary ideals.
%\end{thm}

%%\section*{Acknowledgments}

%%% BIBLIOGRAPHY
\bibliographystyle{amsplain}

\end{document}